\documentclass{article}
\usepackage{amsmath,amssymb,amsfonts,amsthm}
\usepackage{mathtools,mathrsfs}
\usepackage{color}
\usepackage{hyperref}
\usepackage{cleveref}
\usepackage[a4paper]{geometry}
\usepackage{algorithm}
\usepackage{algpseudocode}
\usepackage{stmaryrd}
\usepackage{subcaption}
\usepackage{xspace}

\usepackage{tikz}
\usepackage{pgfplots}
\usepackage{pgfplotstable}
\pgfplotsset{width=.7\textwidth}

\newcommand{\norm}[1]{\lVert #1 \rVert}

\newcommand{\C}{\ensuremath{\mathbb{C}}}
\newcommand{\W}{\ensuremath{\mathbb{W}}}
\newcommand{\rat}{\mathcal{Q}}

\newcommand{\pol}{\ensuremath{\mathbb{P}}}
\newcommand{\ttTBRK}{{\tt TT-TBRK}\xspace}
\newcommand{\tuckTBRK}{{\tt Tuck-TBRK}\xspace}

\newcommand{\tuck}[1]{\ensuremath{\llbracket #1 \rrbracket}}
\newcommand{\ttrain}[1]{\ensuremath{\{ #1 \}}}

% Keywords command
\providecommand{\keywords}[1]
{	
  \textbf{{Keywords:}} #1
}

\newtheorem{lemma}{Lemma}[section]
\newtheorem{theorem}[lemma]{Theorem}
\newtheorem{corollary}[lemma]{Corollary}
\newtheorem{proposition}[lemma]{Proposition}

\theoremstyle{remark}
\newtheorem{remark}[lemma]{Remark}

\theoremstyle{definition}
\newtheorem{definition}[lemma]{Definition}

\newcommand{\tbrk}{tensorized block rational Krylov }

\renewcommand{\vec}[1]{\boldsymbol{#1}}		% for bold vectors

\author{
    Angelo A. Casulli\thanks{
        Scuola Normale Superiore, Pisa, Italy
        (\texttt{angelo.casulli@sns.it}).}
        \thanks{The author is a member of the research group INdAM-GNCS.}
}
\title{Tensorized block rational Krylov methods for tensor Sylvester equations}
\date{}

\begin{document}
    \maketitle 

    \begin{abstract}
        We introduce the definition of tensorized block rational Krylov subspaces and its relation with multivariate rational functions, extending the formulation of tensorized Krylov subspaces introduced in [Kressner D., Tobler C., \emph{{K}rylov subspace methods for linear systems with
        tensor product structure}, SIMAX, 2010]. Moreover, we develop methods for the solution of tensor Sylvester equations with low multilinear or Tensor Train rank, based on projection onto a tensor block rational Krylov subspace. We provide a convergence analysis, some strategies for pole selection, and techniques to efficiently compute the residual.
     
    \end{abstract}

    \keywords{Block rational Krylov, Low-rank tensors, Sylvester equations, Adaptive pole selection}

    \section{Introduction}
    In this work, we develop methods based on projection onto block rational Krylov subspaces, for solving tensor Sylvester equations 
    \begin{equation}{\label{eqn:tensor_sylv}}
        \mathcal{X}\times_1A_1+\mathcal{X}\times_2A_2+\dots+\mathcal{X}\times_dA_d=\mathcal{C},
    \end{equation}
where $\times_i$ denotes the $i$th mode product for tensors (Definition~\ref{def:tensor_product}) and $A_i\in \C^{n_i\times n_i}$ are square matrices for each $i=1,\dots, d$. The unknown $\mathcal{X}$ and the right hand side  $\mathcal{C}$ are $d$ dimensional tensors of size $n_1\times \dots \times n_d$ and we assume that $\mathcal{C}$ is a low rank tensor in Tucker or TT format (see Section~\ref{sec:lr-tensors}).

The problem is equivalent to solving the linear system 
\begin{equation*}
    \vec{A}x=c 
\end{equation*}
    where $x$ and $c$ are vectorizations of $\mathcal{X}$ and $\mathcal{C}$ respectively, and
\begin{equation*}
    \vec{A}=\sum_{i=1}^d I_{n_d}\otimes \cdots \otimes I_{n_{i+1}} \otimes A_i \otimes I_{n_{i-1}}\otimes \cdots \otimes I_{n_1}, \quad \text{ with } \quad A_i\in \C^{n_i\times n_i}.
\end{equation*}
However, for large $d$, the solution of the linear system employing standard computational methods is unfeasible, because the size of the linear system grows exponentially in $d$.

One of the main applications of tensor Sylvester equations is the approximate solution of discretized PDEs, as shown in \cite{grasedyck2004existence}. Consider for instance the Poisson equation on a $d$-dimensional hypercube  
\begin{equation*}
    \begin{cases}
        -\Delta u = f &\text{ in }\Omega\\
        u\equiv 0 &\text{ on }\partial\Omega
    \end{cases}, \qquad \Omega= [0, 1]^d.
\end{equation*}
A discretization using finite differences produces a multilinear Sylvester equation in which the right hand side tensor is given by the sampling of the function $f$ on the discretization of the domain. If $f$ is a smooth multivariate function, then the right hand side can be well approximated by a tensor with low multilinear or Tensor Train rank, see \cite{shi2021compressibility}. In this setting the size of the right hand side is usually large, hence it is essential to exploit low rank structures.

In the case of $d=2$ the equation \eqref{eqn:tensor_sylv}, can be reformulated as the standard Sylvester equation
\begin{equation*}
    A_1X+XA_2^T=U_1U_2^H,
\end{equation*}
where $U_1\in \C^{n_1\times b}$ and $U_2\in \C^{n_2\times b}$, with $b\ll n_1,n_2$. This type of matrix equations has applications in control theory \cite{antoulas2001approximation, benner2015survey},  and it has been extensively studied in the literature, see for instance \cite{simoncini2016computational}. Moreover, in \cite{casulli2022} the authors employed block rational Krylov methods for solving Sylvester equations, that is, they solve the smaller size projected equation 
\begin{equation*}
    V_1^HA_1V_1Y+YV_2^HA_2^TV_2=V_1^HU_1U_2^HV_2,
\end{equation*}
where $V_1$ and $V_2$ are orthonormal block bases of the block rational Krylov subspaces $\rat_{k_1}(A_1,U_1,\vec{\xi}_1)$ and $\rat_{k_2}(A_2,U_2,\vec{\xi}_2)$, respectively (see Section~\ref{sec:block-rat-kry}), and then they approximate the solution $X$ by the matrix $V_1YV_2^H$. The authors also developed pole selection strategies and techniques based on pole reordering to efficiently compute the residual.
This work generalizes those ideas to the case of general $d$.
The case of tensor Sylvester equations has been studied by Kressner and Tobler in \cite{kressner2010krylov}, for the solution of the equation
\begin{equation*}
    \mathcal{X}\times_1A_1+\mathcal{X}\times_2A_2+\dots+\mathcal{X}\times_dA_d=c_1\times_2c_2\times_3\dots\times_d c_d, \quad \text{ with } c_i\in \C^{n_i},
\end{equation*}
projecting onto polynomial Krylov subspaces. This work extends this procedure to equations with a more general right hand side. The use of rational Krylov subspaces gives more freedom in the choice of the projection subspaces, through pole selection.

The rest of this paper is organized as follows: Section~\ref{sec:notation} contains preliminary definitions and results about matrix polynomials, block rational Krylov subspaces and tensors; Section~\ref{sec:tensor-krylov} is devoted to the introduction of tensorized block rational Krylov subspaces and their application for the solution of tensor Sylvester equations with right hand side with low multilinear or Tensor Train rank; Section~\ref{sec:pole_selection} discusses pole selection strategies and in Section~\ref{sec:residual} an efficient way to compute the residual is presented; finally, in Section~\ref{sec:num-exp} the developed methods are tested on the numerical solution of discretized PDEs.

\section{Notation and basic definitions}\label{sec:notation}

We use $\bar{\alpha}$ to denote the complex conjugate of $\alpha \in \C$ and $A^H$ to denote the conjugate transpose of a matrix $A\in\C^{n\times n}$. We denote by $\Lambda(A)$ the spectrum of $A$ and by $\W(A)$ its field of values, that is $\W(A)=\{x^HAx, \text{ with } x^Hx=1\}.$
    The set of extended complex numbers is denoted by $\overline{\C}$. We denote by $\pol(\C)$ and $\pol_k(\C)$ the space of polynomials and polynomials with degree bounded by $k$, respectively. For any polynomial $Q(z)$ we use $\bar{Q}(z)$ to denote the polynomial that has as coefficients the conjugates of the coefficients of $Q(z)$. 
    Given two vectors, $\vec k$, $\vec h$ with $d$ components, the notation $\vec h \le \vec k$ means that $\vec h$ is component-wise smaller than $\vec k$. The space of polynomials in $d$ variables with degree bounded by $\vec k=(k_1,\dots,k_d)$ is denoted by $\pol_{\vec k}(\C)$. The symbol $\pol_k(\C^{n\times n})$ is used to denote matrix polynomials of degree less than $k$, with coefficients in $\C^{n\times n}$, analogously, $\pol_{\vec k}(\C^{n\times n})$ denotes multivariate matrix polynomials of degree bounded by $\vec k$ and coefficients in $\C^{n\times n}$.  
    The identity matrix of size $s$ is denoted by $I_s$. We often use the terminology
    ``block vectors'', to indicate tall and skinny matrices. The size of blocks is denoted by $b$.  The Frobenius norm and the Euclidean norm are denoted by
    $\norm{\cdot}_F$ and $\norm{\cdot}_2$, respectively. We use the symbol
    $\otimes$ to denote the Kronecker product and the symbol $\text{vec}$ to denote the operator that vectorizes a tensor, that is, transforms a tensor into a vector obtained by ordering the elements of the tensor lexicographically.

\subsection{Matrix polynomials and rational functions}
In this section, we provide some
    definitions and properties about matrix polynomials that we use in the paper.  
    
    Let $\pol(\C^{b\times b})$ be the space of polynomials with coefficients in
    $\C^{b \times b}.$ We refer to these as matrix polynomials. We denote by
    $\pol_d(\C^{b\times b})$ the set of matrix polynomials of degree less or equal than $d$. A matrix polynomial is said to be monic if its leading
    coefficient is equal to the identity.

    Given a matrix polynomial $P(z)=\sum_{i=0}^d z^i \Gamma_i,$ where
    $\Gamma_i\in \C^{b\times b}$ for each $i$, we can define the operators $\circ$ and $\circ^{-1}$ from
    $\C^{n\times n}\times\C^{n\times b}$ to $\C^{n\times b}$ as follows: given
    two matrices $A\in \C^{n\times n}$ and $ v\in \C^{n\times b},$ we set
    \begin{equation*} 
    P(A)\circ  v :=\sum_{i=0}^d A^i v\Gamma_i \quad \text{ and } \quad             P(A)\circ^{-1} v=\text{vec}^{-1}\left(\left(\sum_{i=0}^d \Gamma_i^T\otimes A^{i}\right)^{-1}\text{vec} ( v)\right),
    \end{equation*} 
    where for the well posedness of $\circ^{-1}$ it is required $\det(P(\lambda))\neq 0$ for each $\lambda$ eigenvalue of $A$.
    
    \begin{remark}
        It holds
        \begin{equation*}
            P(A)\circ (P(A)\circ^{-1} v)= v, \quad \text{ and } \quad P(A)\circ^{-1} (P(A)\circ  v)= v.
        \end{equation*}
    \end{remark}
    
    These two operators can be extended to the case of rational matrices with 
    prescribed poles: let $Q(z)\in \pol(\C)$ and let $R(z)\in \pol(\C^{b\times b})/Q(z)$, that is there exists $P(z)\in \pol(\C^{b\times b})$ such that $R(z)=P(z)/Q(z)$; given $A\in\C^{n\times n}$ and $ v\in \C^{n\times b}$, we define
        \begin{equation*}
            R(A) \circ  v =Q(A)^{-1}\cdot P(A)\circ  v,
        \end{equation*}
        and
        \begin{equation*}
            R(A) \circ^{-1}  v =Q(A)\cdot P(A)\circ^{-1}  v.
        \end{equation*}    
    The representation of a rational matrix in the form 
    $R(z) = P(z) / Q(z)$ is not unique; however, two equivalent 
    representations yield the same linear mapping 
    $  v \mapsto R(A) \circ v$, and this makes the previous 
    definition well-posed.     
    For a more complete discussion, we refer to \cite{casulli2022}.

    Given a matrix polynomial $P(z)=\sum_{i=0}^dz^i\Gamma_i$, we denote by
    $P^H(z)$ the matrix polynomial $P^H(z): = \sum_{i=0}^dz^i\Gamma_i^H$. Analogously, given a function
    $R(z)=P(z)/Q(z)$, we denote by $R^H(z)$ the rational
    function $P^H(z)/\bar{Q}(z)$.
    
    Finally, given $A\in \C^{d b \times d b}$ and $ v\in \C^{d b\times b},$ a block characteristic polynomial of $A$ with respect to $ v$ is a matrix polynomial $P(z)\in \pol_d(\C^{b\times b})$ such that 
    \begin{equation*}
        P(A)\circ  v=0.
    \end{equation*}
We refer to \cite[Section~2.5]{lund2018new} for a more in-deep coverage of the topic.

\subsection{Block rational Krylov subspaces}\label{sec:block-rat-kry}

Given a matrix $A\in \C^{n \times n}$, a block vector $ v\in \C^{n\times b}$
and a sequence of poles $\vec{\xi}_{k}=\{\xi_j\}_{j=0}^{k-1}\subseteq \C\cup
\{\infty\}\setminus \Lambda(A)$ the $k$th block rational Krylov space is defined
as
\begin{equation*}  
\rat_k(A,v, \vec{\xi}_k) = \left\{ R(A) \circ v : R(z) = \frac{P(z)}{Q_{k}(z)}, \text{with } P(z)\in \pol_{k-1}(\C^{b\times b})\right\},
\end{equation*}
where $Q_{k}(z) = \prod_{\xi_j\in\vec{\xi}_k, \xi_j\neq \infty}(z - \xi_j)$. For
simplicity, we sometimes denote such space by $\rat_k(A,v)$ omitting poles.
Note that when choosing all poles equal to $\infty$ we recover the classical
definition of block Krylov subspaces.

It can be proved that if $\vec\xi_{k}\subsetneq \vec \xi_{k+1}$, then $\rat_k(A, v, \vec {\xi}_k)\subseteq \rat_{k+1}(A, v, \vec {\xi}_{k+1})$. In this
work, we will assume that the block rational Krylov subspaces are always
strictly nested, that is $\rat_k(A, v, \vec {\xi}_k)\subsetneq \rat_{k+1}(A, v,\vec {\xi}_{k+1})$ and
that the dimension of $\rat_k(A,v)$ is equal to $k b$. 

An orthonormal block basis of $\rat_{k}(A,v)$ (for simplicity, we will often
just say ``orthonormal basis'') is defined as a matrix $V_k=[ v_1,\dots,
 v_k]\in \C^{n\times bk}$ with orthonormal columns, such that every block
vector $v\in\rat_k(A,b)$ can be written as $v=\sum_{i=1}^k v_i
\Gamma_i$, for $\Gamma_i\in \C^{b\times b}.$ It can be computed by the block rational 
Arnoldi Algorithm\footnote{ For
simplicity we described a version of the algorithm that does not allow poles
equal to zero. For a more complete version of the algorithm, we refer to
\cite{elsworth2020block}.}
\refeq{algorithm:block-Arnoldi}, that iteratively computes the block columns of
$V_{k+1}$ and two matrices $\underline{K}_k, \underline{H}_k\in\C^{b(k+1) \times
bk}$ in block upper Hessenberg form such that 
\begin{equation}\label{eqn:rad0}
AV_{k+1}\underline{K_k}=V_{k+1}\underline{H_k}.
\end{equation}
We use the symbols $K_k$ and $H_k$ to denote the $bk\times bk$ head principal submatrices of $\underline{K_k}$ and $\underline{H_k}$, respectively. Moreover, we call ``Arnoldi iteration'' the part of Algorithm~\ref{algorithm:block-Arnoldi} enclosed between rows $4$ and $11$.

\begin{algorithm}
    \begin{algorithmic}[1]
	\Require{$A \in \C^{n \times n}, v \in \C^{n\times b},\vec {\xi}_{k+1}= \{\xi_0, \dots, \xi_{k}\}$}
	\Ensure{ $V_{k+1}\in \C^{n\times b(k+1)},$ $\underline{H}_k,  \underline{K}_k\in \C^{b(k+1)\times bk}$}

    \State $ w \gets (I-A/\xi_0)^{-1}v$ \Comment{with the convention $A/\infty=0$}
    \State $[ v_1, \sim ]\gets \text{qr}( w)$  \Comment{compute the thin QR decomposition}  

	\For{$j = 1, \dots, k$}
	\State  Compute $ w=(I-A/\xi_{j})A v_{j}$
    \For{$i = 1, \dots, j$}
    \State  $(\underline{H}_k)_{ix(i),ix(j)}\gets  {v}_i^H w$ \Comment{where $ix(s)=(s-1)b+1:sb$}
    \State$ w \gets  w- v_j(\underline{H}_k)_{ix(i),ix(j)}$
    \EndFor
    \State$[ v_{j+1}, (\underline{H}_k)_{ix(j+1),ix(j)} ]\gets \text{qr}( w)$  \Comment{compute the thin QR decomposition}  
    \State$(\underline{K}_k)_{ix(i),1:(j+1)b}\gets (\underline{H}_k)_{ix(i),1:(j+1)b}/\xi_{j} -{e}_j,$ \Comment{where ${e}_j=[0,\dots,0, I_b,0]^T$ }  
    \State $V_{j+1}\gets[ v_1,\dots, v_{j+1}]$
	\EndFor
\end{algorithmic}

	\caption{Block Rational Arnoldi} \label{algorithm:block-Arnoldi}

\end{algorithm}

\subsection{Low rank tensors}\label{sec:lr-tensors}
In this section we briefly recall basic concepts about tensors, focusing on the representation of low rank tensors in Tucker and Tensor Trains formats. A broader treatment of the argument can be found in \cite{kolda2009tensor} and \cite{oseledets2011tensor}.

The simplest way to define the rank of a $d$-dimensional tensor $\mathcal{X}\in \C^{n_1\times n_2\times \dots \times n_d}$, is the minimum number $k$ of ``rank one'' tensors which sum equals to $\mathcal{X}$ that is, denoting by $x$ a vectorization of $\mathcal{X}$, 
\begin{equation*}
    x=\sum_{i=1}^ku_{i,1}\otimes u_{i,2}\otimes\dots \otimes u_{i,d}, \quad \text{ with } u_{i,j}\in \C^{n_i} \text{ for each } i, j,
\end{equation*}
where $\otimes$ denotes the Kronecker product. This is called CP rank and the above representation of a tensor is called Canonical Polyadic decomposition (usually denoted by CP). The main issue of this decomposition is that the problem of determining the CP rank of a given tensor is NP-hard. To overcome this issue, several alternative definitions of rank have been introduced. In this work we focus on the concepts of multilinear and Tensor Trains ranks, starting by introducing a couple of related definitions.

\begin{definition}
     For each $i=1,\dots, d$ and $j=1,\dots, n_i$, the $j$th mode-$i$ fiber of a tensor $\mathcal{X}\in \C^{n_1\times n_2\times \dots \times n_d}$ is the $n_1\cdots n_{i-1}\cdot n_{i+1}\dots n_d$, vector that contains all the entries of $\mathcal{X}$ with $i$th index equal to $j$ ordered lexicographically. The mode-$i$ unfolding of $\mathcal{X}$, denoted by $X_{(i)}$, is the  $n_i\times n_1\cdots n_{i-1}\cdot n_{i+1}\cdots n_d$ matrix that has as $j$th row the transpose of the $j$th mode-$i$ fiber.
\end{definition}

The unfoldings can be used to define the multilinear rank.

\begin{definition}
    The multilinear rank of a tensor $\mathcal{X}$ is defined as the vector $\vec{k}=(k_1,\dots,k_d),$ where, for each $i=1,\dots, d$,  $k_i$ is the rank of the $i$th unfolding $\mathcal X_{(i)}$.
\end{definition}

As the CP rank is related to the CP decomposition, also the multilinear rank can be associated with a tensor decomposition, called Tucker decomposition. Before introducing this concept, we need to define how to multiply a tensor by a matrix.

\begin{definition}\label{def:tensor_product}
    The $i$th mode product of a tensor $\mathcal{X}\in \C^{n_1\times n_2\times \dots \times n_d}$ by a matrix $B\in \C^{n_i\times j},$ is the $n_1\dots\times n_{i-1}\times  j\times n_{i+1}\times \dots \times n_d$ tensor, denoted by $(\mathcal{X}\times_i B)$, defined as
    \begin{equation*}
        (\mathcal X \times_i B)_{s_1\dots s_d}=\sum_{t=1}^{n_i}\mathcal{X}_{s_1,\dots,s_{i-1},t,s_{i+1},\dots,s_d}B_{s_{i},t},
    \end{equation*}
    for each $s_i\in\{1,\dots,n_i\}$, $i\in\{1,\dots, d\}.$
\end{definition}

First introduced by Tucker in \cite{tucker1966some}, the Tucker decomposition decomposes a tensor $\mathcal{X}\in \C^{n_1\times n_2\times \dots \times n_d}$ into a core tensor $\mathcal G\in \C^{k_1\times k_2\times \dots \times k_d}$ multiplied by matrices $B_i\in \C^{n_i\times k_i}$ with orhonormal columns, along each mode $i$, that is,
\begin{equation}\label{eqn:Tucker_decomp}
    \mathcal{X}=\mathcal G\times_1 B_1\times_2 B_2\times_3\dots \times_dB_d.
\end{equation}
The generators of a Tucker decomposition are usually denoted by $\tuck{\mathcal G;B_1,\dots,B_d}.$ 

\begin{remark}
    The $i$th mode unfolding of the tensor \eqref{eqn:Tucker_decomp} can be written as 
    \begin{equation}\label{eqn:unfolding-tuck}
        \mathcal X_{(i)}=B_i\mathcal G_{(i)}(B_d\otimes \dots \otimes B_{i+1}\otimes B_{i-1}\otimes \dots \otimes B_1).
    \end{equation}
    In particular, the multilinear rank of $\mathcal X$ is component-wise smaller than $(k_1,\dots,k_d)$. See \cite[Section~4]{kolda2009tensor} for further details.
\end{remark}

Note that if $k_i \ll n_i$, for each $i$, the Tucker decomposition allows us to compress the data. For a given tensor, the quasi-optimal approximant in Tucker format with multilinear rank $(r_1,\dots,r_k$) can be computed by repeatedly truncating the $i$th mode unfoldings. This procedure is usually known as multilinear SVD, or high-order SVD (HOSVD), see \cite{de2000multilinear}.

We remark that the memory needed to store a tensor in Tucker format is $\mathcal{O}(k_1\cdots k_d+k_1n_1+\dots+k_dn_d)$, which is a great benefit with respect to storing the full tensor. However, the needed storage is exponential in the dimension of the tensor, hence this representation becomes unfeasible if $d$ is too large. To overcome this problem other low rank representations have been introduced, such as Tensor Trains introduced by Oseledets in \cite{oseledets2011tensor}.

Given a tensor $\mathcal{X}\in \C^{n_1\times\dots\times n_d},$ a Tensor Train decomposition (also called TT decomposition) consists in a sequence of tensors $\mathcal G_1\in \C^{n_1\times r_1}, \mathcal G_2 \in \C^{r_1\times n_2 \times r_2},\dots, \mathcal G_{d-1}\in \C^{r_{d-2}\times n_{d-1}\times r_{d-1}}, \mathcal G_d \in \C^{r_{d-1}\times n_d}$, called carriages, such that
\begin{equation*}
    \mathcal{X}_{(i_1,\dots,i_d)}=\sum_{s_1,\dots, s_{d-1}}{\mathcal{G}_1}_{(i_1,s_1)}{\mathcal{G}_{2}}_{(s_1,i_2,s_2)}\cdots{\mathcal{G}_{d-1}}_{(s_{d-2},i_{d-1},s_{d-1})} {\mathcal{G}_d}_(s_{d-1},i_d).
\end{equation*}
for each $(i_1,\dots,i_d)\le (n_1,\dots,n_d)$. The numbers $r_1,\dots,r_{d-1}$ are called ranks of the decomposition. 

For each $i=1,\dots, d-1$, let $X^{\{i\}}\in \C^{n_1\cdots n_i\times n_{i+1}\cdots n_d}$  be the matrix obtained by grouping the first $i$ indices of a tensor $\mathcal X$ as row indices, and the remaining ones as column indices. The TT rank of a tensor is defined as follows.

\begin{definition}
    Given a tensor $\mathcal{X}\in \C^{n_1\times\dots\times n_d},$ the vector $(r_1,\dots,r_{d-1})$, where $r_i$ is the rank of $X^{\{i\}}$, is called Tensor Train rank (sometimes denoted by TT rank) of $\mathcal X$. 
\end{definition}

The definition of TT rank and TT decomposition are closely related, in particular for each tensor there exists a Tensor Train decomposition with ranks component-wise smaller or equal than its Tensor Train rank (see \cite[Theorem~2.1]{oseledets2011tensor}). 

In practice, for any $\epsilon>0$, every tensor $\mathcal Y$ can be approximated by a tensor $\mathcal X$ in TT format with relative accuracy $\epsilon$, i.e., 
\begin{equation*}
    \norm{\mathcal X-\mathcal Y}_F\le \epsilon \norm{\mathcal Y}_F
\end{equation*} 
employing the TT-SVD algorithm. We refer to \cite{oseledets2011tensor} for further details.

We conclude this section deriving a low rank representation for $\mathcal{X}_{(i)},$ where $\mathcal X$ is a tensor in TT format with carriages $\ttrain{\mathcal G_1,\dots,\mathcal{G}_d}.$ 

First of all, we notice that any entry of $\mathcal{X}$ can be written as 
\begin{equation*}
    \mathcal{X}_{(i_1,\dots,i_d)}=A_{(i_1,\dots,i_{j-1})}G_j(i_j)B_{(i_{j+1},\dots,i_d)},
\end{equation*}
or equivalently
\begin{equation}\label{eqn:kron-entry-tt}
    \mathcal{X}_{(i_1,\dots,i_d)}=\left(B_{(i_{j+1},\dots,i_d)}^T\otimes A_{(i_1,\dots,i_{j-1})}\right)\text{vec}(G_j(i_j)),
\end{equation}
 where $A_{(i_1,\dots,i_{j-1})}$ and $B_{(i_{j+1},\dots,i_d)},$ are a row and a column vector, respectively, defined as  
\begin{equation*}
    (A_{(i_1,\dots,i_{j-1})})_h= \begin{cases} \sum_{s_1,\dots, s_{j-2}}{\mathcal{G}_1}_{(i_1,s_1)}{\mathcal{G}_{2}}_{(s_1,i_2,s_2)}\cdots{\mathcal{G}_{j-1}}_{(s_{j-2},i_{j-1},h)} &\text {for } j>1,\\
        1& \text {otherwise,}
    \end{cases},
\end{equation*}
\begin{equation*}
    (B_{(i_{j+1},\dots,i_d)})_k=\begin{cases}\sum_{s_{j+1},\dots, s_{d-1}}{\mathcal{G}_{j+1}}_{(k,i_{j+1},s_{j+1})}\cdots{\mathcal{G}_{d-1}}_{(s_{d-2},i_{d-1},s_{d-1})} {\mathcal{G}_d}_(s_{d-1},i_d) &\text {for } j<d,\\
        1& \text {otherwise}
    \end{cases}
\end{equation*}
and $G_j(i_j)$ is a matrix defined as
\begin{equation*}
    G_j(i_j)_{h,k}=\begin{cases}
        \mathcal{G}_1(i_1,k) &\text{if }j=1,\\
        \mathcal{G}_d(h,i_d) &\text{if }j=d,\\
        \mathcal{G}_j(h,i_j,k) &\text{otherwise.}
    \end{cases}
\end{equation*}
Noting that 
\begin{equation*}
    \begin{bmatrix}
        \text{vec}(G_j(1))^T\\
        \text{vec}(G_j(2))^T\\
        \vdots\\
        \text{vec}(G_j(n_j))^T\end{bmatrix}=
        \begin{cases}\mathcal{G}_1 &\text{if }j=1,\\
            (\mathcal{G}_j)_{(2)} &\text{otherwise,}
        \end{cases} 
\end{equation*}
from \eqref{eqn:kron-entry-tt} we have 
\begin{equation}\label{eqn:unfolding-tt}
    \mathcal{X}_{(j)}=\begin{cases}
        \mathcal{G}_1C &\text{if }j=1,\\
        (\mathcal{G}_j)_{(2)}C &\text{otherwise,}
    \end{cases}
\end{equation}
where $C$ is the block row that has as cloumns the vectors $B_{(i_{j+1},\dots,i_d)}\otimes A^T_{(i_1,\dots,i_{j-1})}$, ordered lexicographically with respect to $(i_1,\cdots,i_d).$

% \begin{algorithm}
%     \begin{algorithmic}
% 	\Require{$\mathcal{Y} \in \C^{n_1 \times\dots \times n_d},$ prescribed accuracy $\epsilon$}
% 	\Ensure{ $\mathcal G_1, \dots , \mathcal G_d$ carriages of a TT decomposition of $\mathcal{X} \in \C^{n_1 \times\dots \times n_d},$ such that \[\norm{\mathcal X-\mathcal Y}_F\le \epsilon \norm{\mathcal Y}_F\] }

%     \State $\delta \gets \frac{\epsilon}{\sqrt{d-1}}\norm{\mathcal{Y}}_F$
%     \State $\mathcal C \gets \mathcal Y$

%     \State Compute the $\delta$-truncated SVD $\mathcal{C}^{\{1\}}=USV^H$
%     \State $\mathcal G_1\gets U$, $r_1\gets\text{size}(U,2)$
%     \State $\mathcal C\gets \mathcal C\times_1U^H$

%     \For{$k = 2, \dots, d-1$}
%     \State Compute the $\delta$-truncated SVD $\mathcal{C}^{\{2\}}=USV^H$
%     \State $r_i\gets \text{size}(U,2)$ 
%     \State $\mathcal G_i\gets \text{reshape}(U,r_{i-1},n_{i},r_i)$ \Comment{$\mathcal G_i$ is such that $\mathcal G_i^{\{2\}}=U$}
%     \State $\mathcal{\tilde{C}}\gets\text{reshape}(\mathcal{C},r_{i-1}n_i,n_{i+1},\dots,n_d)$,\Comment ${\mathcal{\tilde{C}}}$ is a $d-k+1$ dimensional tensor
%     \State $\mathcal{C}\gets U^H\mathcal{\tilde{C}}$ 
%     \EndFor
%     \State $\mathcal{G}_d\gets \mathcal C$
% \end{algorithmic}

% 	\caption{TT-SVD} \label{alg:TT-SVD}

% \end{algorithm}

\section{Tensorized Krylov methods} \label{sec:tensor-krylov}
Employed by Kressner and Tobler in \cite{kressner2010krylov} for solving tensor Sylvester equations 
\begin{equation}\label{eqn:CP-Sylv}
    \mathcal{X}\times_1A_1+\mathcal{X}\times_2A_2+\dots+\mathcal{X}\times_dA_d=c_1\times_2c_2\times_3\dots\times_d c_d, \quad \text{ with } c_i\in \C^{n_i},
\end{equation}
the tensorized Krylov subspaces are defined as
\begin{equation*}
    \mathcal{K}_{\vec k}^{\otimes}(\{A_i\}_i,\{c_i\}_i)=\text{span}(\rat_{k_1}(A_1,c_1,\vec{\infty})\otimes \rat_{k_1}(A_d,c_d,\vec{\infty})\otimes\dots\otimes\rat_{k_d}(A_d,c_d,\vec{\infty})),
\end{equation*}
where $\vec k=(k_1,\dots,k_d)$ and $\vec{\infty}=\{\infty,\dots,\infty\}.$

These subspaces can be described also using multivariate polynomials, as it is stated in the next lemma (\cite[Lemma~3.2]{kressner2010krylov}).

\begin{lemma}
    Let $\pol_{\vec k}(\C)$ be the space of multivariate polynomials with degree bounded by $\vec k$. We have 
    \begin{equation*}
        \mathcal{K}_{\vec k}^{\otimes}(\{A_i\}_i,\{c_i\}_i)=\{p(A_1,...
        ,A_d)(c_1\otimes\dots
        \otimes c_d),\text{ for }p\in\pol_{\vec k}(\C)\},
    \end{equation*}
    where for each $p=\sum_{I=(i_1,\dots,i_d)\le\vec k}c_Ix^{i_1}\cdots x^{i_d}\in \pol_{\vec k}(\C)$,
    \begin{equation*}
        p(A_1,\dots,A_d)=\sum_{I=(i_1,\dots,i_d)\le\vec k}c_IA_1^{i_1}\otimes\dots \otimes A_d^{i_d}.
    \end{equation*}
\end{lemma}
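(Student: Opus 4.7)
The plan is to identify each factor $\rat_{k_i}(A_i,c_i,\vec\infty)$ with an ordinary polynomial Krylov subspace and then use the mixed product property of the Kronecker product to rewrite tensor products of polynomial images as a single multivariate polynomial evaluated on $c_1\otimes\cdots\otimes c_d$.

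First I would observe that, since all poles are $\infty$, the denominator $Q_{k_i}(z)$ is identically equal to $1$ and the rational Krylov subspace reduces to the standard polynomial Krylov subspace
\[
    \rat_{k_i}(A_i,c_i,\vec\infty)=\text{span}\{c_i, A_ic_i,\dots, A_i^{k_i-1}c_i\}=\{q_i(A_i)c_i:\ q_i\in\pol_{k_i}(\C)\}.
\]
By the definition of the tensor product of subspaces, $\mathcal{K}_{\vec k}^{\otimes}$ is the linear span of the elementary tensors $y_1\otimes\cdots\otimes y_d$ with $y_i\in\rat_{k_i}(A_i,c_i,\vec\infty)$, i.e.\ of vectors of the form $(q_1(A_1)c_1)\otimes\cdots\otimes(q_d(A_d)c_d)$ with $q_i\in\pol_{k_i}(\C)$.

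Next, I would iterate the mixed product property $(M\otimes N)(u\otimes v)=(Mu)\otimes(Nv)$ to rewrite
\[
    (q_1(A_1)c_1)\otimes\cdots\otimes(q_d(A_d)c_d)=\bigl(q_1(A_1)\otimes\cdots\otimes q_d(A_d)\bigr)(c_1\otimes\cdots\otimes c_d).
\]
Specialising to $q_i(z)=z^{j_i}$ with $j_i<k_i$ gives $A_1^{j_1}\otimes\cdots\otimes A_d^{j_d}$, which is precisely $p(A_1,\dots,A_d)$ for the monomial $p(x_1,\dots,x_d)=x_1^{j_1}\cdots x_d^{j_d}$. Since these monomials form a basis of $\pol_{\vec k}(\C)$ and the map $(q_1,\dots,q_d)\mapsto q_1(A_1)\otimes\cdots\otimes q_d(A_d)$ is multilinear in the $q_i$, taking linear combinations yields both inclusions simultaneously: every element of $\mathcal{K}_{\vec k}^{\otimes}$ is of the form $p(A_1,\dots,A_d)(c_1\otimes\cdots\otimes c_d)$ for some $p\in\pol_{\vec k}(\C)$, and conversely every such vector lies in the span of the elementary tensors above.

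There is no genuine obstacle here; the only care required is to keep the ordering of the Kronecker factors aligned with the ordering of variables in $p(A_1,\dots,A_d)$, and to interpret the tensor product of subspaces as the span of elementary tensors, which is precisely what the outer $\text{span}$ in the definition of $\mathcal{K}_{\vec k}^{\otimes}$ highlights.
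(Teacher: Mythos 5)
Your argument is correct and takes essentially the same route as the paper: this particular lemma is only cited (from Kressner--Tobler), but the proof the paper supplies for its block rational generalization immediately below follows exactly your scheme---write each separable element as $q_1(A_1)c_1\otimes\cdots\otimes q_d(A_d)c_d$, apply the mixed product property of the Kronecker product, and use (multi)linearity to pass between elementary tensors and monomials for both inclusions. The only point to watch is the degree indexing: under the paper's convention $\rat_{k_i}(A_i,c_i,\vec{\infty})=\{q(A_i)c_i:\ q\in\pol_{k_i-1}(\C)\}$, so your identification $\mathrm{span}\{c_i,\dots,A_i^{k_i-1}c_i\}=\{q_i(A_i)c_i:\ q_i\in\pol_{k_i}(\C)\}$ is off by one, an inconsistency already present between the lemma statement and the definitions in the paper rather than a flaw in your reasoning.
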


The algorithm for solving \eqref{eqn:CP-Sylv} consists in solving the projection of the equation into the tensorized Krylov subspace, that is
\begin{equation*}
    \mathcal{Y}_{\vec k}\times_1V_1^HA_1V_1+\mathcal{Y}_{\vec k}\times_2V_2^HA_2V_2+\dots+\mathcal{Y}_{\vec k}\times_dV_d^HA_dV_d=V_1^Hc_1\times_2V_2^Hc_2\times_3\dots\times_d V_d^Hc_d,
\end{equation*}
where, for each $i$, $V_i$ is an orthonormal basis of the polynomial Krylov subspace $\rat_{k_i}(A_i,c_i)$, and in approximating the solution $\mathcal{X}$ by the low multilinear rank tensor 
\begin{equation*}
    {\mathcal{X}_{\vec k}}=\mathcal{Y}_{\vec k}\times_1V_1\times_2V_2\dots\times_3\times_dV_d.
\end{equation*}

 The authors have also proved that the solution $\mathcal{X}$ can be well approximated by a low rank tensor, relating the norm of the error with the approximation of the function $\frac{1}{x_1+x_2\dots+x_d}$ with a sum of separable multivariate functions, see \cite[Theorem~2.5]{kressner2010krylov}. Moreover, they also analyzed the effects of using extended Krylov subspaces (i.e., $\rat_{k_i}(A_i,c_i,\vec{\xi})$, where $\vec{\xi}$ is given by alternating $0$ and $\infty$), in the construction of tensorized Krylov subspaces.

 In the next sections, we generalize such procedure to the solution of the tensor Sylvester equation
\begin{equation}\label{eqn:tensor_sylv3}
    \mathcal{X}\times_1A_1+\mathcal{X}\times_2A_2+\dots+\mathcal{X}\times_dA_d=\mathcal{C},
\end{equation}
for $\mathcal{C}$ with low multilinear or TT rank employing  as $V_i$, an orthonormal basis for the block rational Krylov subspaces $\rat_{k_i}(A_i,C_i)$ with appropriate block vectors $C_i$, for $i=1,\dots, d$.

\subsection{Tensorized block rational Krylov methods}
One of the novelties of this work is to analyze the use of block rational Krylov subspaces in tensorized Krylov methods. On one hand, the use of block Krylov subspaces for solving the tensor Sylvester equation \eqref{eqn:tensor_sylv3} allows us to easily treat the case of $C_i$ with more than one column. On the other hand, the use of rational Krylov methods gives more freedom in the choice of the projection subspace, through the pole selection.

 We start defining \tbrk subspaces.

\begin{definition}{\label{def:tensor_rational_krylov}}
    For each $i=1,\dots d$, let $A_i\in \C^{n_i\times n_i}$ and $C_i\in \C^{n_i\times b_i}.$ Let $\vec k=(k_1,\dots,k_d)$, with $k_i\in \mathbb{N}$ and for each $i=1,\dots, d$ let $\vec{\xi}_i\in \bar{\C}^{k_i}$. We define the tensorized block rational Krylov subspace associated with $ \{A_i\}_i , \{C_i\}_i $ and $\{\vec \xi_i\}_i$ as
    \begin{equation*}
        \rat_{\vec k}^{\otimes}( \{A_i\}_i, \{C_i\}_i , \{\vec\xi_i\}_i)=\left\{\sum_{i=1}^s v_i \Gamma_i, \text{ for } s\in \mathbb{N}, v_i\in W, \Gamma_i \in \C^{b_1\cdots b_d\times b_1\cdots b_d}\text{ for each }i\right\},
    \end{equation*}
    with $W=\rat_{ k_1}( A_1, C_1 , \xi_1)\otimes \rat_{ k_2}( A_2, C_2 , \xi_2)\otimes \dots \otimes \rat_{ k_d}( A_d, C_d, \xi_d)$.
\end{definition}
For simplicity of notation, we sometimes omit poles, denoting a tensorized block rational Krylov subspace just by $\rat_{\vec k}^{\otimes}( \{A_i\}_i, \{C_i\}_i)$.

The relation between rational Krylov spaces and rational functions can be extended also in the case of \tbrk spaces. First of all, we define an extension of the operator $\circ$ to multivariate polynomials.

\begin{definition}
    Let
    \begin{equation*}
        P(x_1,\dots,x_d)= \sum_{I=(i_1,\dots,i_d)\le \vec k}\Gamma_I x_1^{i_1}\cdot x_2^{i_2}\cdots x_d^{i_d}\in\pol_{\vec{k}}(\C^{b_1\cdots b_d\times b_1\cdots b_d}),
    \end{equation*}
     and let $A_i\in\C^{n_i \times n_i}$ , $C_i\in\C^{n_i\times b_i}$ for $i=1,\dots,d$. We define
     \begin{equation*}
        P(A_1,\dots,A_d)\circ(C_1,\dots,C_d)=\sum_{I=(i_1,\dots,i_d)\le \vec k} (A_1^{i_1}C_1\otimes A_2^{i_2}C_2\otimes \cdots \otimes A_d^{i_d}C_d)\Gamma_I.
     \end{equation*}
     Moreover, if $R(x_1,\dots,x_d)=P(x_1,\dots,x_d)/Q_1(x_1)\cdots Q_d(x_d)$ with $Q_i(x)\in \pol(\C)$ for each $i$,  we define
     \begin{equation*}
        R(A_1,\dots,A_d)\circ(C_1,\dots,C_d)=Q_1(A_1)^{-1}\otimes\cdots \otimes Q_d^{-1}(A_d)\cdot (P(A_1,\dots,A_d)\circ(C_1,\dots,C_d)).
     \end{equation*}

\end{definition}

Now we can state the following lemma.

\begin{lemma}
    Let $\pol_{\vec k}(\C^{b_1\cdots b_d\times b_1\cdots b_d})$ be the space of matrix polynomials in $d$ variables of degree bounded by $\vec k$. It holds
    \begin{equation*}
        \rat_{\vec k}^{\otimes}( \{A_i\}_i, \{C_i\}_i , \{\vec\xi_i\}_i)=
    \{r(A_1,\dots, A_d)\circ (C_1,\dots,C_d) :r\in \pol_{\vec k}/Q(x_1)\cdots Q(x_d)\},
    \end{equation*}
    with $Q_i(x)=\prod_{\xi\in \vec {\xi}_i, \xi\neq \infty}(x-\xi)$.
    \end{lemma}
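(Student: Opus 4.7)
The plan is to prove the equality by double inclusion, using the mixed-product property $(A\otimes B)(C\otimes D)=AC\otimes BD$ of the Kronecker product to commute factors of the form $Q_j(A_j)^{-1}$ and $A_j^{m_j}C_j$ through the tensor products.

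I would first handle the inclusion $(\supseteq)$. Given $R=P/(Q_1\cdots Q_d)$ with $P(x_1,\dots,x_d)=\sum_{\vec m\le\vec k}\Gamma_{\vec m}\,x_1^{m_1}\cdots x_d^{m_d}$, each factor $Q_j(A_j)^{-1}A_j^{m_j}C_j$ can be interpreted as $(x_j^{m_j}/Q_j(x_j))(A_j)\circ C_j$ with scalar numerator $x_j^{m_j}$ of degree at most $k_j-1$, and therefore lies in $\rat_{k_j}(A_j,C_j,\vec\xi_j)$. The Kronecker product $\bigotimes_j Q_j(A_j)^{-1}A_j^{m_j}C_j$ belongs to $W$, and right-multiplying by $\Gamma_{\vec m}$ and summing over $\vec m$ produces an element of $\rat_{\vec k}^{\otimes}$. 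Pulling the leading factor $\bigotimes_j Q_j(A_j)^{-1}$ out via the mixed-product identity, this element coincides with $R(A_1,\dots,A_d)\circ(C_1,\dots,C_d)$.

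For the reverse inclusion, I would take a generic $v=\sum_i v_i\Gamma_i\in\rat_{\vec k}^{\otimes}$ and, using that $W$ is the span of pure Kronecker products, expand each $v_i=\sum_k w_1^{(i,k)}\otimes\cdots\otimes w_d^{(i,k)}$ with $w_j^{(i,k)}\in\rat_{k_j}(A_j,C_j,\vec\xi_j)$. Substituting $w_j^{(i,k)}=Q_j(A_j)^{-1}\sum_{m=0}^{k_j-1}A_j^m C_j\,\Gamma^{(i,k)}_{j,m}$ and applying the mixed-product property twice (first to pull the $Q_j(A_j)^{-1}$ factors out front, then to separate the block vector $\bigotimes_j A_j^{m_j}C_j$ from the coefficient $\bigotimes_j\Gamma^{(i,k)}_{j,m_j}$), the sum collapses to
\[
v=\Bigl(\bigotimes_j Q_j(A_j)^{-1}\Bigr)\sum_{\vec m}\Bigl(\bigotimes_j A_j^{m_j}C_j\Bigr)\tilde\Gamma_{\vec m},
\]
with $\tilde\Gamma_{\vec m}:=\sum_{i,k}\bigl(\bigotimes_j\Gamma^{(i,k)}_{j,m_j}\bigr)\Gamma_i\in\C^{b_1\cdots b_d\times b_1\cdots b_d}$. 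Defining $P(x_1,\dots,x_d)=\sum_{\vec m}\tilde\Gamma_{\vec m}\,x_1^{m_1}\cdots x_d^{m_d}$ and $R=P/(Q_1\cdots Q_d)$ then exhibits $v$ as $R(A_1,\dots,A_d)\circ(C_1,\dots,C_d)$.

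The main subtlety, beyond the mixed-product identity, is the matrix-coefficient bookkeeping: the pure tensors $\bigotimes_j\Gamma^{(i,k)}_{j,m_j}$ alone do not exhaust $\C^{b_1\cdots b_d\times b_1\cdots b_d}$, and it is precisely the freedom to sum over $(i,k)$ and to right-multiply by an arbitrary $\Gamma_i$ in the definition of $\rat_{\vec k}^{\otimes}$ that ensures every $\tilde\Gamma_{\vec m}$ is attainable. Keeping this count straight, and confirming that the resulting matrix polynomial $P$ lies in $\pol_{\vec k}(\C^{b_1\cdots b_d\times b_1\cdots b_d})$, is where care is required.
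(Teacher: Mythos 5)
Your argument is correct and follows essentially the same route as the paper: a double inclusion in which the mixed-product property of the Kronecker product is used to commute the $Q_j(A_j)^{-1}$ factors through the tensor products and to assemble the multivariate coefficients as sums of Kronecker products $\bigotimes_j\Gamma^{(i,k)}_{j,m_j}$ right-multiplied by the $\Gamma_i$. The only (minor) divergence is in the inclusion $\supseteq$, where the paper decomposes each coefficient $\Gamma_{\vec m}$ into a sum of pure Kronecker products, using that these span $\C^{b_1\cdots b_d\times b_1\cdots b_d}$, so as to land in $W$ term by term, whereas you observe that the definition of $\rat_{\vec k}^{\otimes}$ already permits an arbitrary matrix coefficient to multiply an element of $W$ --- a slightly more direct step that makes the spanning argument unnecessary.
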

    \begin{proof} Let $v\in \{\rat_{ k_1}( A_1, C_1 , \vec {\xi}_1)\otimes \rat_{ k_2}( A_2, C_2 , \vec {\xi}_2)\otimes \dots \otimes \rat_{ k_d}( A_d, C_d, \vec {\xi}_d)\}.$ For each $i$ there exists a univariate rational function $r_i(x)=p_i(x)/Q_i(x)$ with $p_i(x)=\sum_{j=0}^{k_i}\Gamma_j^{(i)}x^j\in \pol_{k_i}(\C^{b_i\times b_i})$, such that
        \begin{equation*}
            v=r_1(A_1)\circ C_1 \otimes r_2(A_2)\circ C_2\otimes\dots\otimes r_d(A_d)\circ C_d.
        \end{equation*}
    Denoting by $r_v=\sum_{I\le \vec k}\Gamma_I x_1^{i_1}\cdots x_d^{i_d}/Q_1(x_1)\cdots Q_d(x_d)$, where $\Gamma_I= \Gamma_{i_1}^{(1)}\otimes \Gamma_{i_2}^{(2)}\otimes \dots \otimes \Gamma_{i_d}^{(d)}$, with $I=(i_1,\dots,i_d)$, it is immediate to verify that 
    \begin{equation*}
        v=r_v(A_1,A_2,\dots,A_d)\circ (C_1,C_2,\dots,C_d).
    \end{equation*}
    Hence, if $w=\sum_{i=1}^s v_i \Delta_i$, with $v_i\in \{\rat_{ k_1}( A_1, C_1 , \vec {\xi}_1)\otimes \rat_{ k_2}( A_2, C_2 , \vec {\xi}_2)\otimes \dots \otimes \rat_{ k_d}( A_d, C_d, \vec {\xi}_d)\}$ and $\Delta_i\in \C^{b_1\cdots b_d\times b_1\cdots b_d}$ for each $i$,
    then letting 
    \begin{equation*}r_w=\sum_{i=1}^s \Delta_i r_{v_i}\in \pol_{\vec k}(\C^{b_1\cdots b_d \times b_1 \cdots b_d})/Q(x_1)\cdots Q(x_d),
    \end{equation*}
     we have 
    \begin{equation*}
        w=r_w(A_1,A_2,\dots,A_d)\circ (C_1,C_2,\dots,C_d).
    \end{equation*}
        To prove the other inclusion let $r=p(x_1,\dots, x_d)/Q_1(x_1)\cdots Q_d(x_d)$, where $p(x_1,\dots, x_d)=\sum_{I\le \vec k}\Gamma_I x_1^{i_1}\cdots x_d^{i_d}\in \pol_{\vec k}(C^{b_1\cdots b_d \times b_1\cdots b_d})$. It is easy to prove that
    \begin{equation*}
        \text{span}\{\C^{b_1\times b_1}\otimes \C^{b_2\times b_2}\otimes \dots \otimes \C^{b_d\times b_d}\}=\C^{b_1\cdots b_d\times b_1\cdots b_d},
    \end{equation*}
    in particular each $\Gamma_I$ can be written as 
    \begin{equation*}
        \Gamma_I=\sum_{j=1}^{t_I} \alpha_{j,I} \Gamma_{j,I}^{(1)}\otimes \Gamma_{j,I}^{(2)}\otimes \dots \otimes \Gamma_{j,I}^{(d)},
    \end{equation*}
    for $\alpha_{j,I}\in \C$ and $\Gamma_{j,I}^{(s)}\in \C^{b_s\times b_s}$ for each $s$, hence $r(A_1,\dots,A_d)\circ(C_1,\dots,C_d)$ equals to 
    \begin{equation*}
        \sum_{I\le \vec k}\sum_{j=1}^{t_I} \alpha_{j,I} Q_1(A_1)^{-1}A_1^{i_1}C_1\Gamma_{j,I}^{(1)}\otimes Q_2(A_2)^{-1}A_2^{i_2}C_2\Gamma_{j,I}^{(2)}\otimes \dots \otimes Q_d(A_d)^{-1}A_d^{i_d}C_d\Gamma_{j,I}^{(d)},
    \end{equation*}
  that is a linear combination of elements in $\{\rat_{ k_1}( A_1, C_1 , \vec {\xi}_1)\otimes \rat_{ k_2}( A_2, C_2 , \vec {\xi}_2)\otimes \dots \otimes \rat_{ k_d}( A_d, C_d, \vec {\xi}_d)\}$.
        
    \end{proof}

Notice that an orthonormal block basis for a tensorized block rational Krylov subspace is given by  $\vec V=\bigotimes_{i=1}^dV_i$, where $V_i$ is an orthonormal basis for $\rat_{ k_i}( A_i, C_i, \vec {\xi}_i)$, hence the computation of $\vec V$ reduces to the computation of $d$ block rational Krylov subspaces.

The space defined above can be used to solve a tensor Sylvester equation
\begin{equation}\label{eqn:tensor_sylv2}
    \mathcal{X}\times_1A_1+\mathcal{X}\times_2A_2+\dots+\mathcal{X}\times_dA_d=\mathcal{C},
\end{equation}
where $\mathcal C\in \C^{n_1\times n_2\dots\times n_d}$ has low multilinear or Tensor Train rank, using projection methods.

First of all, we have to compute $V_1,\dots, V_d$, orthonormal basis of $\rat_{k_1}(A_1,C_1)$,\dots, $\rat_{k_d}(A_d,C_d)$, respectively, employing the block rational Arnoldi algorithm (i.e., Algorithm~\ref{algorithm:block-Arnoldi}). The choice of the block vectors $C_i$ depends on the low rank representation of $\mathcal{C}$. This aspect is discussed in Section~\ref{sec:Tucker} and Section~\ref{sec:TT} for $\mathcal{C}$ in Tucker and Tensor Trains format, respectively. 

As in the classical Krylov tensor method, the solution $\mathcal{X}$ is approximated by the tensor $\mathcal{X}_{\vec k}=\mathcal{Y}_{\vec k}\times_1V_1\times_2\dots\times_dV_d,$ where $\mathcal{Y}_{\vec k}$ solves the smaller size tensor Sylvester equation
\begin{equation}\label{eqn:small_sylv}
    \mathcal{Y}_{\vec k}\times_1A_1^{(k_1)}+\dots+\mathcal{Y}_{\vec k}\times_dA_d^{(k_d)}=\mathcal{C}_{\vec k},
\end{equation}
with $A_i^{(k_i)}=V_i^HA_iV_i$ for each $i=1,\dots, d$ and $\mathcal{C}_{\vec k}=\mathcal{C}\times_1V_1^H\times_2 \dots \times_dV_d^H$. 
This choice satisfies the Galerkin condition
\begin{equation*}
    \mathcal{C}_{\vec k}-\mathcal{Y}_{\vec k}\times_1A_1^{(k_1)}+\dots+\mathcal{Y}_{\vec k}\times_dA_d^{(k_d)}\perp \rat^{\otimes}_{\vec k}(\{A_i\}_i,\{C_i\}_i).
\end{equation*}

\begin{remark}
    The solvability of \eqref{eqn:tensor_sylv2} does not guarantee the solvability of the projected equations \eqref{eqn:small_sylv}. A sufficient condition to avoid this issue is to require
        \begin{equation*}
        0\notin \W(A_1)+\W(A_2)+\dots+\W(A_d).
    \end{equation*} 
    However, this condition can be hard to verify. In practice, if a projected equation is not solvable, we can just change the projection space, for instance, using different poles.
\end{remark}

\subsection{Convergence analysis}\label{sec:convergence-analysis}
In the following, we combine the results from \cite{beckermann2013error} and \cite{casulli2022} to analyze the convergence of tensorized block rational Krylov methods. The outcomes of this section are fundamental in developing efficient ways to adaptively determine poles for the method and to compute the residual, topics that are extensively discussed in Sections~\ref{sec:pole_selection} and \ref{sec:residual}.

To easily apply the results of \cite{beckermann2013error}, we consider the tensor Sylvester equation in vectorized form, that is
\begin{equation}\label{eqn:vectorized_Sylv}
    \vec{A}x=c 
\end{equation}
    where $x$ and $c$ are vectorizations of $\mathcal{X}$ and $\mathcal{C}$ respectively, and
\begin{equation}\label{eqn:kron_A}
    \vec{A}=\sum_{i=1}^d I_{n_d}\otimes \cdots \otimes I_{n_{i+1}} \otimes A_i \otimes I_{n_{i-1}}\otimes \cdots \otimes I_{n_1}, \quad \text{ with } \quad A_i\in \C^{n_i\times n_i}.
\end{equation}

We define $\vec{V}= {V}_d\otimes V_{d-1} \otimes \cdots \otimes V_1,$ and for each $i=1,\dots, d$,
    \begin{align*}
        \vec{V}_i&=I_{n_d}\otimes \cdots \otimes I_{n_{i+1}} \otimes V_i \otimes I_{n_{i-1}}\otimes \cdots \otimes I_{n_1},\\
       \overline{\vec{V}}_i&= {V}_d\otimes \cdots \otimes V_{i+1} \otimes I_{n_i}\otimes V_{i-1}\otimes \cdots \otimes V_1.
    \end{align*}
    We  denote by $r(\vec{V},A_1,\dots,A_d,c)$, sometimes abbreviated by $r$, the residual $c -\vec{A}\vec{V}y$, where $y$ is the vectorization of the tensor $\mathcal{Y}_{\vec k}$ that solves the projected equation \eqref{eqn:small_sylv}. Analogously,
    \begin{equation*}
        r(\vec{V}_i, A_1^{(k_{1})},\dots A^{(k_{i-1})}_{i-1}, A_{i}, A_{i+1}^{(k_{i+1})},\dots,  A^{(k_{d})}_d,\overline{\vec{V}}_i^H c),
    \end{equation*} is defined as 
    \begin{equation*}
        \overline{\vec{V}}_i^H c-\left( A_d^{(k_{d})}\otimes \cdots \otimes A_{i+1}^{(k_{i+1})}\otimes A_{i}\otimes A_{i-1}^{(k_{i-1})}\otimes\cdots \otimes  A_1^{(k_{1})}\right)\vec{V}_iy.
    \end{equation*}

    To describe a representation of the residual that depends on the poles of the rational Krylov subspaces, we start by considering Proposition~2.2 of \cite{beckermann2013error}.

    \begin{proposition} \label{prop: beck-kress-tob} With the notation introduced above, the following statements hold:
        \begin{enumerate}
            \item The residual $r=c- \vec{A}y$ can be represented as
            \begin{equation*}
             r=\sum_{i=1}^d\overline{\vec{V}}_i r(\vec{V}_i, A_1^{(k_{1})},\dots A_{i-1}^{(k_{i-1})}, A_{i}, A_{i+1}^{(k_{i+1})},\dots,  A_d^{(k_{d})},\overline{\vec{V}}_i^Hc)+\hat{c},
        \end{equation*}
        where the remainder term $\hat{c}=(\prod_{i=1}^d(I-\overline{\vec{V}}_i\overline{\vec{V}}_i^H))c$ vanishes for $c \in \text{span}(\vec{V})$;
        \item The vectors $\hat{c}$ and 
        $
            \overline{\vec{V}}_i\overline{\vec{V}}_i^Hr=\overline{\vec{V}}_i r(\vec{V}_i, A_1^{(k_{1})},\dots A_{i-1}^{(k_{i-1})}, A_{i}, A_{i+1}^{(k_{i+1})},\dots, A_d^{(k_{d})},\overline{\vec{V}}_i^Hc),       
        $ for $i=1,\dots, d$ are mutually orthogonal. In particular, this implies 
        \begin{equation*}
            \norm{r}_2^2=\sum_{i=1}^d \norm{r(\vec{V}_i, A_1^{(k_{1})},\dots,  A_{i-1}^{(k_{i-1})}, A_{i}, A_{i+1}^{(k_{i+1})},\dots, A_d^{(k_{d})},\overline{\vec{V}}_i^Hc)}_2^2+\norm{\hat{c}}_2^2.
        \end{equation*}
        \end{enumerate}

    \end{proposition}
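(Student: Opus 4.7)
The plan is to work entirely in the vectorized formulation, introducing the shorthands $P := \vec{V}\vec{V}^H$ and $P_i := \overline{\vec{V}}_i \overline{\vec{V}}_i^H$. Each is an orthogonal projector, because the columns of $\vec{V}$ and of $\overline{\vec{V}}_i$ are orthonormal. The Galerkin choice of $\mathcal{Y}_{\vec k}$ reads $\vec{V}^H r = 0$, i.e.\ $P r = 0$. A first, essentially mechanical, step is to check the identity
\begin{equation*}
    P_i r \;=\; \overline{\vec{V}}_i \cdot r(\vec{V}_i, A_1^{(k_1)}, \dots, A_i, \dots, A_d^{(k_d)}, \overline{\vec{V}}_i^H c),
\end{equation*}
by expanding $\overline{\vec{V}}_i^H \vec{A}\vec{V} y$ factor-by-factor in the Kronecker structure: on a mode $k \neq i$ the factor $V_k^H V_k$ collapses to the identity or to $A_k^{(k_k)}$, while on mode $i$ it produces $V_i$ or $A_i V_i$, reassembling the reduced Sylvester operator that appears in the definition of the $i$th reduced residual.

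The crucial combinatorial observation is that $P_i P_j = P$ for every $i \neq j$: computing mode by mode, the pairs $(I_{n_i},\,V_j V_j^H)$ and $(V_i V_i^H,\,I_{n_j})$ collapse into $V_i V_i^H$ and $V_j V_j^H$ respectively, so the resulting operator equals $\vec{V}\vec{V}^H = P$. The $P_i$'s therefore commute, and inclusion--exclusion yields
\begin{equation*}
    \prod_{i=1}^d (I - P_i) \;=\; I - \sum_{i=1}^d P_i + (d-1)\, P.
\end{equation*}
With this in hand, Part~1 reduces to showing $\prod_i (I - P_i)\, \vec{A}\vec{V} y = 0$, because then $\hat{c} = \prod_i(I - P_i) c = \prod_i(I - P_i) r$ and, using $Pr = 0$, the identity collapses to $\hat{c} = r - \sum_i P_i r$. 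To verify this vanishing I would split $\vec{A}\vec{V} y = \sum_j \vec{A}_j \vec{V} y$ into its Sylvester summands: each $\vec{A}_j \vec{V}$ has range inside that of $\overline{\vec{V}}_j$, so $P_j \vec{A}_j \vec{V} y = \vec{A}_j \vec{V} y$; and a short Kronecker computation gives $P_i \vec{A}_j \vec{V} y = \vec{V}\,(I \otimes \cdots \otimes A_j^{(k_j)} \otimes \cdots \otimes I)\, y$ for $i \neq j$. Summing over $i$ and invoking the Galerkin relation $\vec{V}^H \vec{A} \vec{V} y = \vec{V}^H c$ makes the $\sum_i P_i \vec{A}\vec{V} y$ contribution and the $(d-1)P\vec{A}\vec{V}y = (d-1) Pc$ contribution cancel exactly.

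Part~2 then falls out from the algebra of commuting projectors. Mutual orthogonality of the $P_i r$'s is immediate, since $\langle P_i r, P_j r \rangle = r^H P_j P_i r = r^H P r = 0$. For the orthogonality of $P_i r$ with $\hat{c}$, I would write $\hat{c} = (I - P_i) \prod_{\ell \neq i}(I - P_\ell) r$ (using commutativity of the $P_\ell$) and observe $P_i (I - P_i) = 0$, so $P_i \hat{c} = 0$. The norm identity is then Pythagoras combined with $\|\overline{\vec{V}}_i r_i\|_2 = \|r_i\|_2$, which holds because $\overline{\vec{V}}_i$ has orthonormal columns. The main obstacle, in my view, is getting the Kronecker bookkeeping right: the clean identity $P_i P_j = P$ for $i \neq j$ and the explicit formula for $P_i \vec{A}_j \vec{V} y$ are what drive the whole argument, and both require careful tracking of how the orthonormal bases interact with the Sylvester summands.
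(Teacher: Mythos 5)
Your proof is correct. Note, however, that the paper does not prove this proposition at all: it is quoted as Proposition~2.2 of the cited work of Beckermann, Kressner and Tobler, so there is no internal argument to compare against; what you supply is a self-contained verification. Its two load-bearing identities check out. First, $\overline{\vec{V}}_i\overline{\vec{V}}_i^H\,\overline{\vec{V}}_j\overline{\vec{V}}_j^H=\vec{V}\vec{V}^H$ for $i\neq j$ (mode $i$ contributes $I_{n_i}\cdot V_iV_i^H$, mode $j$ contributes $V_jV_j^H\cdot I_{n_j}$, and every other mode is idempotent), so every product of two or more distinct $P_\ell$ collapses onto $P=\vec{V}\vec{V}^H$ and the alternating binomial sum $\sum_{s=2}^d(-1)^s\binom{d}{s}=d-1$ gives exactly your expansion $\prod_i(I-P_i)=I-\sum_iP_i+(d-1)P$. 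Second, $P_i\vec{A}_j\vec{V}y=\vec{V}\,(I\otimes\cdots\otimes A_j^{(k_j)}\otimes\cdots\otimes I)\,y$ for $i\neq j$ is independent of $i$, so summing over $i$ and invoking the Galerkin relation $\vec{V}^H\vec{A}\vec{V}y=\vec{V}^Hc$ produces the $(d-1)\vec{V}\vec{V}^Hc$ term that cancels $(d-1)P\vec{A}\vec{V}y$; together with $P_j\vec{A}_j\vec{V}y=\vec{A}_j\vec{V}y$ this yields $\prod_i(I-P_i)\vec{A}\vec{V}y=0$, and Part~1 follows as you describe. The orthogonality and Pythagoras steps are then routine. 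Two small points to make explicit if you write this out: (i) the statement also asserts that $\hat c$ vanishes for $c\in\text{span}(\vec{V})$, which in your framework is the observation $(I-P_i)P=0$, itself a consequence of $P_iP=P$; (ii) the paper's $\vec{V}_i$ is written with identities of size $n_m$ in the off-modes, which does not typecheck against $y$ --- your reading, with identities of size $bk_m$ there, is the one under which the reduced residual is well defined, and it is worth flagging that you are silently repairing that notation.
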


    Thanks to the previous proposition,  to monitor the norm of the residual it is sufficient to control the norms of $\hat {c}$ and  $\overline{\vec{V}}_i^H r$.

    For each $i$, the partial residual $\overline{\vec{V}}_i^H r$ is the vectorization of the tensor
    \begin{equation*}
        \mathcal{R}_i=\overline{\mathcal{C}}_i-\overline{\mathcal{Y}}^i_{\vec k}\times_1A_1^{(k_1)}-\dots-\overline{\mathcal{Y}}^i_{\vec k}\times_{i+1}A_{i-1}^{(k_{i-1})}-\overline{\mathcal{Y}}^i_{\vec k}\times_{i}A_{i}-\overline{\mathcal{Y}}^i_{\vec k}\times_{i+1}A_{i+1}^{(k_{i+1})}-\overline{\mathcal{Y}}^i_{\vec k}\times_dA_d^{(k_d)},
    \end{equation*}
    where $\overline{\mathcal{C}}_i= \mathcal{C}\times_1V_1^H\times_2\dots \times_{i-1}V_{i-1}^H\times_{i+1}V_{i+1}^H\times_{i+2}\dots \times_d V_d^H$ and $\overline{\mathcal{Y}}_{\vec k}^i={\mathcal{Y}}_{\vec k}\times_iV_i$. In particular, the Euclidean norm of $\overline{\vec{V}}_i^H r$ equals to the Frobenius norm of the $i$th mode unfolding
    \begin{equation}\label{eqn:residual_unfolding}
        (\mathcal{R}_i)_{(i)}=(\overline{\mathcal{C}}_i)_{(i)}-A_i(\overline{\mathcal{Y}}_{\vec k}^i)_{(i)}-(\overline{\mathcal{Y}}_{\vec k}^i)_{(i)}B_i,
    \end{equation}
    where
    \begin{equation}\label{eqn:B_i}
     \begin{split}
         B_i&=\sum_{j=1}^{i-1}I_{k_d}\otimes \cdots \otimes I_{k_{i+1}}\otimes I_{k_{i-1}}\otimes \cdots \otimes I_{k_{j+1}} \otimes {A_j}^{(k_{j})}\otimes I_{k_{j-1}}\otimes\cdots \otimes I_{k_{i_1}} \\
         &+\sum_{j=i+1}^{d}I_{k_d}\otimes \cdots \otimes I_{k_{j+1}} \otimes {A_j}^{(k_{j})}\otimes I_{k_{j-1}}\otimes\cdots \otimes I_{k_{i+1}}\otimes I_{k_{i-1}}\otimes \cdots \otimes I_{k_{i_1}}.
     \end{split}
\end{equation}
\begin{remark}\label{rmk:sylv_residual}
   The matrix  ${(\mathcal{R}_i)_{(i)}}$
 is the residual of the Sylvester equation $A_iX-XB_i=(\overline{\mathcal{C}}_i)_{(i)}$, solved projecting $A_i$ into the block rational Krylov subspace $\rat_{k_i}(A,C_i).$ 
\end{remark}

Summarizing, we have the following corollary of Proposition~\ref{prop: beck-kress-tob}. 

 \begin{corollary}\label{cor:residual-norm} The squared Euclidean norm of the residual $r(\vec{V},A_1,\dots,A_d,c)$ can be written as
    \begin{equation*}
        \norm{r(\vec{V},A_1,\dots,A_d,c)}_2^2
        =\sum_{i=1}^d \norm{(\mathcal{R}_i)_{(i)}}_F^2+\norm{\hat{c}}_2^2,
    \end{equation*}
    where the remainder term $\hat{c}$ vanishes for $c \in \text{span}(\vec{V})$.

 \end{corollary}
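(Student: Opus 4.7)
The plan is to read Corollary~\ref{cor:residual-norm} as a direct corollary of Proposition~\ref{prop: beck-kress-tob}(2), by identifying each partial residual appearing in that proposition with the $i$th mode unfolding $(\mathcal{R}_i)_{(i)}$ introduced in \eqref{eqn:residual_unfolding}.

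First I would invoke Proposition~\ref{prop: beck-kress-tob}(2) to write
\begin{equation*}
    \norm{r(\vec{V},A_1,\dots,A_d,c)}_2^2
    =\sum_{i=1}^d \norm{r(\vec{V}_i, A_1^{(k_{1})},\dots, A_{i-1}^{(k_{i-1})}, A_i, A_{i+1}^{(k_{i+1})},\dots, A_d^{(k_{d})},\overline{\vec{V}}_i^H c)}_2^2+\norm{\hat{c}}_2^2.
\end{equation*}
Then I would fix an index $i$ and rewrite the $i$th summand. By definition the partial residual is
\begin{equation*}
    \overline{\vec{V}}_i^H c - \bigl(A_d^{(k_d)}\otimes\cdots\otimes A_{i+1}^{(k_{i+1})}\otimes A_i \otimes A_{i-1}^{(k_{i-1})}\otimes\cdots\otimes A_1^{(k_1)}\bigr)\vec{V}_i y,
\end{equation*}
and unvectorizing via the correspondence between mode products and Kronecker factors (Definition~\ref{def:tensor_product}) yields exactly the tensor $\mathcal{R}_i$ given before \eqref{eqn:residual_unfolding}. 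Hence the Euclidean norm of the $i$th partial residual is $\norm{\mathcal{R}_i}_F$, and, since the Frobenius norm of a tensor coincides with the Frobenius norm of any of its unfoldings, it equals $\norm{(\mathcal{R}_i)_{(i)}}_F$.

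Substituting these identities into the sum above yields the claimed formula, and the remark about $\hat{c}$ vanishing whenever $c\in\text{span}(\vec{V})$ is already part of Proposition~\ref{prop: beck-kress-tob}(1) and is inherited verbatim. The only genuinely non-routine step is the rewriting of the partial residual as $\mathcal{R}_i$; but this is a direct bookkeeping check between Kronecker products on vectorized tensors and mode products on tensors, made transparent by the definition of $B_i$ in \eqref{eqn:B_i} and by Remark~\ref{rmk:sylv_residual}. I do not expect any real obstacle, so the ``proof'' essentially consists in unpacking the notation introduced just before the statement.
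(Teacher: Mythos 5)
Your proposal is correct and follows exactly the route the paper takes: the corollary is obtained by applying Proposition~\ref{prop: beck-kress-tob}(2) and identifying each partial residual $\overline{\vec{V}}_i^H r$ with the vectorization of $\mathcal{R}_i$, so that its Euclidean norm equals $\norm{(\mathcal{R}_i)_{(i)}}_F$. The bookkeeping step translating Kronecker factors into mode products is precisely the content of the paragraph preceding \eqref{eqn:residual_unfolding} in the paper, so nothing is missing.
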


 \subsection{RHS in Tucker format}\label{sec:Tucker}
In this section we assume that the right hand side $\mathcal{C}$ of \eqref{eqn:tensor_sylv2} is given in Tucker format, generated by $\tuck{\mathcal{G}, U_1 ,\dots, U_d}$, with $\mathcal G\in \C^{b_i\times\dots \times b_d}$ and $U_i\in \C^{n_i\times b_i}$ for each $i=1,\dots, d$. 

For each $i$, a Tucker representation of the tensor ${\overline{\mathcal{C}}_i}$ is generated by 
\begin{equation*}
    \tuck{\mathcal{G}, V_1^HU_1,\dots,V_{i-1}^HU_{i-1},U_i,V_{i+1}^HU_{i+1}\dots,V_d^HU_d},
\end{equation*} hence, from \eqref{eqn:unfolding-tuck}, we have that the matrix $({\overline{\mathcal{C}}_i})_{(i)}$ admits the low rank representation 
$
    ({\overline{\mathcal{C}}_i})_{(i)}=U_iZ^H,
$
for an appropriate block vector $Z$. From Corollary~\ref{cor:residual-norm}, the convergence of the method is related to the norms of the matrices ${(\mathcal{R}_i)_{(i)}}$. By Remark~\ref{rmk:sylv_residual} we are implicitly solving the Sylvester equation $A_iX-XB_i=U_iZ^H$, by projecting $A_i$ into the block rational Krylov subspace $\rat_{k_i}(A,C_i).$ Hence, the natural choice of the block vector for the construction of the $i$th block rational Krylov subspace is $C_i=U_i$.

Assume now we know $V_1,\dots,V_d$ orthonormal basis for $\rat_{k_1}(A_1,U_1),\dots,\rat_{k_d}(A_d,U_d)$, respectively, and the projected matrices $ A_i^{(k_i)} =V_i^HA_iV_i$. We have to solve the projected tensor Sylvester equation
\begin{equation*}
    \mathcal{Y}_{\vec k}\times_1 A^{(k_1)}_1+\mathcal{Y}_{\vec k}\times_2 A^{(k_2)}_2+\dots+\mathcal{Y}_{\vec k}\times_d A^{(k_d)}_d={\mathcal{C}_{\vec k}},
\quad \text{ where } \quad
   { \mathcal C}_{\vec k}= \mathcal C \times_1 V_1^H,\dots,\times_dV_d^H.
\end{equation*}

Note that $\mathcal{C}_{\vec k}\in \C^{b_1k_1\times\dots\times b_dk_d}$, hence it is reasonable that such tensor can be fully stored and the solution $\mathcal Y_{\vec k}$ of the projected equation can be computed by a direct method such as the one presented by Chan and Kressner in \cite{chen2020recursive}. A Tucker decomposition of the approximate solution $\mathcal X_{\vec k}$ related with the \tbrk subspace is generated by $\tuck{\mathcal{Y}_{\vec k}, V_1,\dots,V_d}$.

\subsection{RHS in Tensor Train format}\label{sec:TT}
The main advantage of having $\mathcal{C}\in \C^{n_1\times\dots\times n_d}$ in TT format is the possibility of handling more summands in the tensor Sylvester equation since the memory storage in this format increases only linearly with $d$. Clearly, in such a case it is necessary to produce an approximate solution tensor $\mathcal{X}_{\vec k}$ in TT format as well.

Assume now that the tensor $\mathcal{C}$ is represented in TT format with carriages $\ttrain{\mathcal G_1,\dots,\mathcal G_d}$. For each $i$, a TT representation of the tensor ${\overline{\mathcal{C}}_i}$ is given by the carriages
\begin{equation*}
    \ttrain{\mathcal G_1\times_1V_1^H,\mathcal G_2\times_2V_2^H,\dots, \mathcal G_{i-1}\times_2V_{i-1}^H,\mathcal G_i,\mathcal G_{i+1}\times_2V_{i+1}^H,\dots,\mathcal G_{d}\times_2V_{d}^H}
\end{equation*} 
and from \eqref{eqn:unfolding-tt} we have that the matrix $({\overline{\mathcal{C}}_i})_{(i)}$ admits the low rank representation 
\begin{equation*}    ({\overline{\mathcal{C}}_i})_{(i)}=
    \begin{cases}
        \mathcal{G}_1Z^H &\text{if }i=1,\\
        (\mathcal{G}_i)_{(2)}Z^H &\text{otherwise,}
    \end{cases}
\end{equation*}
for an appropriate block vector $Z$. With the same argument of the Tucker case, we have that the natural choice of the block vector for the construction of the $i$th block
rational Krylov subspace is 
\begin{equation*}   C_i=
    \begin{cases}
        \mathcal{G}_1 &\text{if }i=1,\\
        (\mathcal{G}_i)_{(2)} &\text{otherwise.}
    \end{cases}
\end{equation*}

Assuming to know $V_1,\dots,V_d$ orthonormal bases for $\rat_{k_1}(A_1,C_1),\dots,\rat_{k_d}(A_d,C_d)$, respectively, and the projected matrices $ A^{(k_i)}_i =V_i^HA_iV_i$, we have to solve the projected tensor Sylvester equation
\begin{equation*}
    \mathcal{Y}_{\vec k}\times_1 A^{(k_1)}_1+\mathcal{Y}_{\vec k}\times_2A^{(k_2)}_2+\dots+\mathcal{Y}_{\vec k}\times_d A^{(k_d)}_d={\mathcal{C}_{\vec k}},
\quad \text{ where } \quad
   { \mathcal C}_{\vec k}= \mathcal C \times_1 V_1^H\dots\times_dV_d^H,
\end{equation*}
where the matrices $A_i^{(k_i)}$ have of small/medium size. 

We remark that in this case it is not guaranteed that the tensor ${\mathcal C}_{\vec k}$ can be fully stored since its size grows exponentially with $d$. A way to overcome this issue is to use an algorithm for the solution of the projected tensor Sylvester equation that keeps the solution in TT format, such as the AMEn algorithm described in \cite{dolgov2014alternating}.

 \section{Pole selection}\label{sec:pole_selection}

In this section we derive techniques for pole selection, employing a representation of the residual that involves the poles of the block rational Krylov subspaces. 

Thanks to Corollary~\ref{cor:residual-norm} and Remark~\ref{eqn:residual_unfolding}, the analysis can be reduced to the problem of minimizing the norms of the residuals $(\mathcal{R}_i)_{(i)}$ of the Sylvester equations $A_iX - XB_i = (C_i)_{(i)}$, solved projecting $A_i$ into the block rational Krylov subspace $\rat_{k_i}(A, C_i)$. Since this work is devoted to studying the case of $\mathcal{C}$ in Tucker or TT format, we also assume that the matrices $(C_i)_{(i)}$ admit a low rank representation $(C_i)_{(i)}=C_iZ_i^H$, as it has been shown in Sections~\ref{sec:Tucker} and \ref{sec:TT} for the case of $\mathcal C$ in TT or Tucker format. 
 
 To represent the norm of the matrices $(\mathcal{R}_i)_{(i)}$ with a formulation that involves the chosen poles we can use a simplified version of Theorem~6.1 of \cite{casulli2022}.

 \begin{theorem} \label{thm:block-residual}
    Let $A \in \C^{n\times n}$, $B\in \C^{m\times m}$, $C\in \C^{n\times b}$ and $Z\in \C^{m\times b}$. Let $V\in \C^{n\times b\cdot h}$ be a matrix with orthonormal columns that spans $\rat_h(A,C,\vec {\xi})$ and let $A_h=V^HAV$. Let $X_{h}=V^HY_{h}$ where $Y_{h}$ is the solution of the Sylvester equation
    \begin{equation*}
        A_hY_{h}-Y_{h}B=C^{(h)} Z^H, 
    \end{equation*}
    where $C^{(h)}=V^HC.$ 
    Let $\chi_A(z)\in \pol_h(\C^{b\times b})$ be the monic block characteristic polynomial of $A_h$ with respect to $C^{(h)}$. Define $R_A^G(z)=\frac{\chi_A(z)}{Q_A(z)},$ where 
    \begin{equation*}
        Q_A(z)=\prod_{\xi\in\vec \xi, \xi \neq \infty}(z-\xi).
    \end{equation*}

  Then the residual matrix is equal to 
  \begin{equation*}
      \left(R_A^G(A)\circ C\right)({R_A^G}^H(B)\circ^{-1} Z)^H.
    \end{equation*}
\end{theorem}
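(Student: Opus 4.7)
The plan is to derive the factorization directly from the Galerkin condition, mirroring the structure of \cite[Theorem~6.1]{casulli2022} of which this statement is an abbreviated version. Writing the residual as $R = C Z^H - A X_h + X_h B = C Z^H - A V Y_h + V Y_h B$, the first observation is that $C \in \rat_h(A,C)$ implies $C = V C^{(h)}$. Substituting and using the projected Sylvester equation $A_h Y_h - Y_h B = C^{(h)} Z^H$, the terms in $\text{span}(V)$ cancel and one is left with the invariance defect
\begin{equation*}
R = -(AV - V A_h)\, Y_h.
\end{equation*}
Hence it suffices to understand the out-of-subspace action of $A$ on $V$ applied to $Y_h$.

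The next step is to identify $AV - V A_h$ with the action of the block characteristic polynomial. By definition, $\chi_A(z)$ is the monic element of $\pol_h(\C^{b\times b})$ such that $\chi_A(A_h) \circ C^{(h)} = 0$, and a standard block rational Arnoldi computation (using the relation $AV\underline{K}_{h-1} = V_{h+1}\underline{H}_{h-1}$ from \eqref{eqn:rad0} together with the definition of the $\circ$ operator) shows that the vector $R_A^G(A) \circ C = Q_A(A)^{-1}\chi_A(A)\circ C$ is orthogonal to $V$ and spans the residual direction produced by the block characteristic polynomial. Intuitively, $\chi_A$ is the unique (monic) object of minimal block degree that ``escapes'' $\text{span}(V)$ through $A$, and after dividing by $Q_A$ one obtains the correct rational representative living in the image of $A V$.

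To recover the right factor, I would use the projected Sylvester equation together with the annihilation identity $\chi_A(A_h)\circ C^{(h)}=0$ to express $Y_h$ in terms of evaluations of $\chi_A$ and $Q_A$ at $B$ acting on $Z$. Concretely, one rewrites $R = -(AV - VA_h) Y_h$ in the separated form $R = (R_A^G(A)\circ C)\, W^H$ where $W$ satisfies an equation of the type $\bar Q_A(B) W = \chi_A^H(B) \circ Z$ in the $\circ^{-1}$ sense; by the definition of $R_A^{G,H}(z) = \chi_A^H(z)/\bar Q_A(z)$ and of $\circ^{-1}$, this is precisely $W = R_A^{G,H}(B)\circ^{-1} Z$, yielding the claimed formula.

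The main obstacle will be the careful bookkeeping of the $b\times b$ matrix coefficients of $\chi_A(z)$ as they are propagated simultaneously through $A$ on the left and $B$ on the right: in the scalar Krylov case the analogous identity is classical, but the block case requires keeping right-multiplication by $\Gamma_i$ consistent with the definition of $\circ$ and $\circ^{-1}$ given in Section~\ref{sec:notation}. A secondary technical point is the well-posedness of $R_A^{G,H}(B)\circ^{-1} Z$, which demands $\det(\chi_A(\lambda))\neq 0$ for every $\lambda\in\Lambda(B)$; this follows from the assumed solvability of both the original and projected Sylvester equations, since the roots of $\det(\chi_A)$ are exactly the Ritz values in $\Lambda(A_h)$ and solvability prevents them from meeting $\Lambda(B)$.
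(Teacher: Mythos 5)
The paper does not actually reprove this statement: its entire proof is the observation that the claim is the special case of Theorem~6.1 of \cite{casulli2022} obtained by taking the projection on the $B$-side equal to the identity ($V=I_m$), so that only one rational factor appears. Your proposal instead tries to reconstruct the underlying proof of that cited theorem, and the strategy you outline (Galerkin cancellation, reduction to the defect $-(I-VV^H)AVY_h$, identification of the out-of-space direction with $R_A^G(A)\circ C$, back-substitution to obtain the $B$-side factor) is indeed the strategy of the cited result. Your first step is correct, modulo the silent hypothesis $C=VV^HC$, which holds only if $\infty\in\vec{\xi}$ (the paper imposes $\xi_0=\infty$ only later, in Section~\ref{sec:pole_selection}); it should be stated.

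The genuine gap is that the two identifications constituting the actual content of the theorem are asserted rather than derived. First, you claim that ``a standard block rational Arnoldi computation'' shows $R_A^G(A)\circ C$ is orthogonal to $\mathrm{span}(V)$ and spans the column space of $(I-VV^H)AV$; but the formula requires more than the correct direction---it requires the exact normalization of the left factor, and proving that the \emph{monic} block characteristic polynomial divided by $Q_A$ is precisely the right representative is the nontrivial exactness lemma (this is where strict nestedness and $\dim\rat_h(A,C,\vec{\xi})=hb$ enter). Second, the right factor is introduced by fiat: the assertion that $R=(R_A^G(A)\circ C)\,W^H$ with $\bar{Q}_A(B)W=\chi_A^H(B)\circ Z$ ``in the $\circ^{-1}$ sense'' is exactly the identity to be proven, and the block bookkeeping you yourself flag as the ``main obstacle'' (coefficients $\Gamma_i$ acting on the right through $\circ$ while $B$ acts on the left after transposition and conjugation) is precisely where a scalar-style manipulation breaks down. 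Finally, the well-posedness argument is slightly off: invertibility in $\chi_A^H(B)\circ^{-1}Z$ requires $\det\chi_A^H(\lambda)\neq 0$ for all $\lambda\in\Lambda(B)$, i.e.\ $\overline{\Lambda(B)}\cap\Lambda(A_h)=\emptyset$, which is not literally the solvability condition $\Lambda(A_h)\cap\Lambda(B)=\emptyset$ of the projected equation; the conjugations introduced by $(\cdot)^H$ must be tracked. As written, your text is a correct plan that ultimately defers to \cite{casulli2022}---which is what the paper does in one line---rather than a self-contained proof.
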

\begin{proof}
    It is sufficient to consider Theorem~6.1 in \cite{casulli2022} with $V=I_m$.
\end{proof}

From now on we assume that for each $i$, the first pole in $\vec \xi_i$ is equal to infinity, that is, the first block column of $V_i$ is an orthonormal basis of the space spanned by the columns of $C_i$. The first consequence of this hypothesis is that the term $\hat{c}$ in the formulation of the residual vanishes, hence the convergence of the residual can be monitored just by the Frobenius norms of the matrices  $(\mathcal{R}_i)_{(i)}$. Notice that, thanks to Theorem~\refeq{thm:block-residual}, we have 
\begin{equation}\label{eqn:res-sylv}
    (\mathcal R_i)_{(i)}=\left( R_{i}^G(A_i)\circ C_i\right)({ R_{i}^G}^H(-B_i)\circ^{-1}  Z_i)^H, \quad \text{ with }  \quad  R_{i}^G(z)=\chi_{i}(z)/Q_{i}(z),
\end{equation}
where $\chi_{i}(z)$ is the monic block characteristic polynomial of ${A}_i^{(k_i)}$ with respect to $V_i^HC_i$ and 
\begin{equation*}
    Q_i(z)=\prod_{\xi\in\vec \xi_i, \xi \neq \infty}(z-\xi).
\end{equation*}

As it has been established in \cite{casulli2022}, to keep the Frobenius norm of \eqref{eqn:res-sylv} small it is sufficient to choose poles that minimize the norm of $R_{i}^G(z)^{-1}$ for every $z$ in the field of values of $-B_i$. A way to approximatively minimize such norm is to adaptively choose the next pole for $\vec \xi_i$ accordingly with one of the following two possibilities:
\begin{enumerate}
    \item the first method, denoted by {\texttt{det}}, is to choose the new pole as the conjugate of
    \begin{equation}\label{eqn:det}
            \arg\max_{\lambda\in \W(-B_i)} \frac{\prod_{\xi\in \vec{\xi}_i, \xi\neq \infty}|\lambda-\bar{\xi}|^b}{\prod_{\mu\in\Lambda\left({A}^{(k_i)}_i\right)}|\lambda-\bar{\mu}|},
    \end{equation}
    \item to introduce the second method, denoted by {\texttt{det2}}, for each $\lambda \in \W(-B_i)$, let $\mu_1,\dots, \mu_{bk_i}$ be the eigenvalues of $A^{(k_i)}_i$, ordered such that $|\bar{\lambda}-\mu_1|\le|\bar{\lambda}-\mu_2|\le\dots\le|\bar{\lambda}-\mu_{bk_i}|$. The new pole is chosen as the conjugate of
    \begin{equation}\label{eqn:det2}
        \arg \max_{\lambda \in \W(-B_i)}\frac{\prod_{\xi\in\vec{\xi}_B, \xi\neq \infty}(\lambda-\bar{\xi})}{\prod_{j=1}^{k-1}|\lambda-\bar{\mu}_{(j-1)b+1}|}.
    \end{equation}
\end{enumerate}

The main advantage of choosing poles accordingly with {\tt det2} instead of {\tt det} is that we have to minimize a rational function with a much smaller degree. Moreover, from the numerical experiments made in \cite{casulli2022} it appears that in the case of $d=2$, {\tt det2} has comparable or better performances than {\tt det}.
\begin{remark}\label{rmk:approx_WA}
    The field of values of the matrix $-B_i$ is the sum of the field of values of the matrices $-{A}_j^{(k_j)}$, for $j\neq i$. In general, the determination of the field of values of $-A_j^{(k_j)}$ is not easy. What we do in practice is to substitute the field of values with the convex hull of the set obtained by taking the union of the eigenvalues of the matrices $-A_j^{(s)}$ for $s\le k_j$.
\end{remark}

\begin{remark}\label{rmk:boundary_W(-B)}
    If $\W(-B_i)$ has a nonempty interior, for the maximum modulus principle it is sufficient to maximize the functions \eqref{eqn:det} and \eqref{eqn:det2} over its boundary.
\end{remark}

\section{Computation of the residual}\label{sec:residual}
The explicit computation of the residual to monitor the convergence of the algorithm is usually expensive; to overcome this problem we can compute the norms of the matrices $(\mathcal R_i)_{(i)}$ and then recover the norm of the residual using the result of Corollary~\refeq{cor:residual-norm}. If the last pole is equal to infinity, the norms of the partial residuals can be cheaply computed thanks to the following lemma.

\begin{lemma}
Let $A\in \C^{n\times n}$, $B\in \C^{m\times m}$, $C\in\C^{n\times b}$, $Z \in \C^{m\times b}$, let $V_{h+1}$ be a block orthonormal basis of $\rat_h(A,C, \xi)$, and let $\underline{H}_{h},{K}_{h}$ be matrices generated by the block rational Arnoldi algorithm, where $\xi_0=\xi_h=\infty$. Denote by $V_h$ the matrix obtained by removing from $V_{h+1}$ the last $b$ columns and denote by $\rho_h$ the residual
\begin{equation}\label{eqn:large-residual}
    \rho_h=AV_{h}Y + V_{h} Y B - C Z^H,
\end{equation}
where $Y$ solves 
\begin{equation}\label{eqn:small-residual}
    (V_h^HAV_{h})Y + Y B - V_h^HC Z^H=0.
\end{equation}
Then  \begin{equation*}
    \rho_h= V_{h+1}\vec e_{h+1}\underline{H}_hK_h^{-1}Y \quad \text{ and }\quad \norm{\rho_h}_F =\norm{\vec e_{h+1}\underline{H}_hK_h^{-1}Y}_F,
\end{equation*}
where $\vec e_{h+1}=[0, I_b]$, and ${K}_{h}$ is the leading principal $bh\times bh$ submatrix of $\underline{K}_k$.
\end{lemma}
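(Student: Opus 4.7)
The plan is to reduce the large residual to a projection of $AV_hY$ onto the complement of $\mathrm{range}(V_h)$, then exploit the two endpoint poles to convert that projection into an explicit formula in the Arnoldi matrices.

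First I would use the assumption $\xi_0 = \infty$. This means that $v_1$ is just the $Q$-factor of the thin QR of $C$, so the columns of $C$ lie in the span of the first block column of $V_h$. Hence $V_h V_h^H C = C$, and therefore $CZ^H = V_h (V_h^H C) Z^H$. Plugging this into the definition of $\rho_h$ and using the small Sylvester equation \eqref{eqn:small-residual} to replace $YB - V_h^H CZ^H$ by $-(V_h^H A V_h) Y$, I get
\begin{equation*}
   \rho_h = A V_h Y + V_h Y B - V_h V_h^H C Z^H = A V_h Y - V_h (V_h^H A V_h) Y = (I - V_h V_h^H) A V_h Y.
\end{equation*}

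Next I would use the assumption $\xi_h = \infty$. Because $\underline{K}_h$ is block upper Hessenberg and its only possibly nonzero block in the last block row is at position $(h+1, h)$, and because the construction in Algorithm~\ref{algorithm:block-Arnoldi} sets that block to zero exactly when $\xi_h = \infty$, we obtain $\underline{K}_h = \begin{bmatrix} K_h \\ 0 \end{bmatrix}$. Substituting this into the Arnoldi identity \eqref{eqn:rad0} gives $A V_h K_h = V_{h+1} \underline{H}_h$, and assuming $K_h$ is invertible (the standard nested-subspace hypothesis) yields
\begin{equation*}
    A V_h = V_{h+1} \underline{H}_h K_h^{-1}.
\end{equation*}

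Now I combine the two. Writing $V_{h+1} = [V_h,\, v_{h+1}]$ and using orthonormality, I see that $(I - V_h V_h^H) V_{h+1} = [0, v_{h+1}] = v_{h+1} \vec{e}_{h+1}$, where $\vec{e}_{h+1} = [0, I_b]$ extracts the last block. Hence
\begin{equation*}
    \rho_h = (I - V_h V_h^H) V_{h+1} \underline{H}_h K_h^{-1} Y = v_{h+1}\,\bigl(\vec{e}_{h+1} \underline{H}_h K_h^{-1} Y\bigr),
\end{equation*}
which is the desired representation. The norm identity then follows because $v_{h+1}$ has orthonormal columns, so left-multiplication by $v_{h+1}$ preserves the Frobenius norm.

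The only non-routine obstacle is the structural claim that $\xi_h = \infty$ forces the last block row of $\underline{K}_h$ to vanish; this requires a careful reading of line 10 of Algorithm~\ref{algorithm:block-Arnoldi} and the convention $A/\infty = 0$, after which the Hessenberg shape of $\underline{K}_h$ does the rest. Everything else is just the standard telescoping manipulation that turns a Galerkin residual into its projection onto the last basis vector.
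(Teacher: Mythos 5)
Your proof is correct and follows essentially the same route as the paper: use $\xi_0=\infty$ to get $V_hV_h^HC=C$, eliminate $YB$ via the projected equation to reduce $\rho_h$ to $(I-V_hV_h^H)AV_hY$, and use $\xi_h=\infty$ together with $AV_h=V_{h+1}\underline{H}_hK_h^{-1}$ to land on the last block column. The only (harmless) difference is that you derive $AV_h=V_{h+1}\underline{H}_hK_h^{-1}$ explicitly from the vanishing last block row of $\underline{K}_h$, whereas the paper cites this identity from the earlier reference.
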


\begin{proof}
    From \eqref{eqn:small-residual} follows that 
    \begin{equation*}
        Y B=-(V_h^HAV_{h})Y +V_h^HC Z^H,
    \end{equation*}
    hence \eqref{eqn:large-residual} can be rewritten as
\begin{equation*}
    R=AV_{h}Y - V_{h}(V_h^HAV_{h})Y 
    +V_hV_h^HC Z^H- C Z^H.
\end{equation*}
    Since $\xi_0=\infty$, we have $C\in \rat_h(A,C, \vec \xi)$, then $V_hV_h^HC = C$. In particular we have
    \begin{equation*}
        R=AV_{h}Y - V_{h}(V_h^HAV_{h})Y.
    \end{equation*}
    Note now that since $\xi_h=\infty$, $V_{h+1}V_{h+1}^HAV_{h}=AV_h$, hence 
    \begin{equation}
        R=V_{h+1}\left(V_{h+1}^HA-\begin{bmatrix}
            V_h^H\\0
        \end{bmatrix}A\right)V_{h}Y=V_{h+1}\begin{bmatrix}
            0\\ e_{h+1}^HV_{h+1}^H
        \end{bmatrix}AV_h Y,
    \end{equation}
where $ e_{h+1}=[0, I_b]$. Moreover, we have that $AV_h=V_{h+1}\underline{H}_hK_h^{-1}$ (see \cite[Section~4]{casulli2022} for more details) and since the columns of $V_{h+1}$ are orthonormal we have 
\begin{equation*}
    R= V_{h+1} e_{h+1}\underline{H}_hK_h^{-1}Y \quad \text{ and }\quad \norm{R}_F =\norm{ e_{h+1}\underline{H}_hK_h^{-1}Y}_F.
\end{equation*}

\end{proof}

To avoid the multiplication by the (possibly large) matrices $A_i$, we can use block rational Krylov methods that start with a pole equal to infinity and after each step swap the last two poles guaranteeing that the last pole is always equal to infinity, as illustrated in \cite[Section~5]{casulli2022}. 

Summarizing, if we perform a \tbrk method where, for each block rational Krylov subspace, we have $\xi_0=\infty$ and we guarantee that the last pole is equal to infinity, we can write the norm of the residual as 
\begin{equation*}
    \norm{r(\vec{V},A_1,\dots,A_d,c)}_2
    =\sqrt{\sum_{i=1}^d \norm{\mathcal{Y}_{\vec k}\times_ie_{k_{i}+1}^{H}\underline{H}^{(i)}_{k_i}(K^{(i)}_{k_i})^{-1}}_F^2}.
\end{equation*}
where $\mathcal{Y}_{\vec k}$ is the solution of the projected equation, for each $i$ we have $e_{k_i+1}=[0,I_b]^H\in \C^{b(k_i+1)\times b}$, and $\underline{H}^{(i)}_{k_i},K^{(i)}_{k_i}$ are generated by the block rational Arnoldi algorithm for the computation of $\rat_{k_i}(A,C_i)$.
    
\section{Numerical results} \label{sec:num-exp}
In this section, we provide numerical results on the convergence of the presented algorithms, for the solution of tensor Sylvester equations with right hand side represented in Tucker or Tensor Train format. The MATLAB code of the algorithms used for solving tensor Sylvester equations has been made freely available at \url{https://github.com/numpi/TBRK-Sylvester}.

As a first test problem, we compute the approximate solution of the Poisson equation on a $d$-dimensional hypercube  
\begin{equation*}
    \begin{cases}
        -\Delta u = f &\text{ in }\Omega\\
        u\equiv 0 &\text{ on }\partial\Omega
    \end{cases}, \qquad \Omega= [0, 1]^d.
\end{equation*}

Unless otherwise specified we discretize the domain with a uniformly spaced grid with $n=1024$ points in each direction, and the operator $\Delta$ by centered finite differences, which yields the tensor Sylvester equation

\begin{equation*}
    \mathcal{X}\times_1A+\dots+ \mathcal{X}\times_d A=\mathcal{F}, \quad \text{ with } \quad
    A=\frac{1}{h^2}\begin{bmatrix}
        2&-1\\
        -1&2&\ddots\\
        &\ddots&\ddots&-1\\
        &&-1&2
    \end{bmatrix},
\end{equation*}
where $h=\frac{1}{n-1}$ is the distance between the grid points and $\mathcal{F}$ is the tensor given by sampling $f$ on the grid points. If the function $f$ is a smooth multivariate function, the tensor $\mathcal{F}$ is numerically low-rank, that is it can be approximated by a low multilinear or TT rank tensor, see \cite{shi2021compressibility}. 

To test the algorithms also in the case of non symmetric $A_i$s, we consider as a second test problem the approximate solution of the convection-diffusion partial differential equation
\begin{equation*}
    \begin{cases}
        -\epsilon\Delta u + w \cdot \nabla u = f &\text{ in }\Omega\\
        u\equiv 0 &\text{ on }\partial\Omega
    \end{cases}, \qquad \Omega= [0, 1]^d,
\end{equation*}
where $\epsilon>0$ is the viscosity parameter and $ w$ is the convection vector. As described in \cite{palitta2016matrix}, assuming $ w=(\Phi_1(x_1),\Phi_2(x_2),\dots,\Phi_d(x_d))$, and discretizing the domain with a uniformly spaced grid as before, we obtain the tensor Sylvester equation

\begin{equation*}
    \mathcal{X}\times_1(\epsilon A+\vec{\Phi}_1 B)+\dots+ \mathcal{X}
    \times_d (\epsilon A+\vec{\Phi}_d B )=\mathcal{F}, 
\end{equation*}
where $A$ and $\mathcal{F}$  are defined as in the previous test problem,
\begin{equation*}
    \vec {\Phi}_i=\begin{bmatrix}
        \Phi_i(h)\\
        &\Phi_i(2h)\\
        &&\ddots\\
        &&&\Phi_i((n-2)h)
    \end{bmatrix},
\end{equation*}
 for $i=1,\dots,d$ and 
\begin{equation*}
    B=\frac{1}{2h}\begin{bmatrix}
        0&1\\
        -1&\ddots&\ddots\\
        &\ddots&\ddots&1\\
        &&-1&0
    \end{bmatrix}
\end{equation*}
is the discretization by centered finite differences of the first order derivative in each direction.

The numerical simulations have been run on a server with two
Intel(R) Xeon(R) E5-2650v4 CPU running at 2.20 GHz and 256 GB of RAM, using MATLAB R2021a with the Intel(R) Math Kernel Library Version 2019.0.3. All the experiments are made in double precision, real arithmetic. In particular, if a nonreal pole is employed during a Krylov method,  the subsequent is chosen as its conjugate. This allows us to keep the matrices and the tensors real. We refer the reader to \cite{ruhe1994rational2} for a more complete discussion.

During the experiments, different choices of poles are used. In particular, we denote by {\tt det} and {\tt det2} the poles described in \eqref{eqn:det} and \eqref{eqn:det2}, respectively, we indicate by {\tt poly} the use of all poles equal to infinity any by {\tt ext} the case in which the poles are chosen alternating $0$ and infinity.

For the computation of rational Krylov subspaces we employ the \texttt{rktoolbox} described in \cite{berljafa2014rational}. The maximization problems that appear in \eqref{eqn:det} and \eqref{eqn:det2} are solved by maximizing the functions on a sampling of the boundary of the set described in Remark~\ref{rmk:approx_WA}.

\subsection{Tucker format}
In this section, we provide numerical results for the case of right hand side in Tucker format employing the algorithm described in Section~\ref{sec:Tucker}, denoted by \tuckTBRK.

In Figure~\ref{plot:diffusion-d4} we show the behavior of the relative norm of the residual by varying the number of Arnoldi iterations, for the solution of a $4$-dimensonal discretized Poisson equation using different choices of poles.
In Figure~\ref{plot:diffusion-d3-comparing-n} the behavior of the relative norm of the residual is compared for different sizes of the discretization grid for solving a $3$-dimensional Poisson equation.

In the case of discretized convection-diffusion equation, it is not guaranteed that the matrices $A_i$ are symmetric, hence complex poles could appear. As noticed at the beginning of the section, the complex poles are employed coupled with their conjugates to guarantee real arithmetic. For this reason, the number of Arnoldi iterations performed in the construction of different Krylov subspaces $\rat_{k_i}(A_i,C_i)$ may be different. 

Figure~\ref{plot:convection-d3} shows the behavior of the relative norm of the residual by varying the mean number of Arnoldi iterations, for the solution of a $3$-dimensional discretized convection-diffusion equation, where $\epsilon=0.1$ and $ w=(1 + \frac{(x_1 + 1)^2}{4},0,0)$ using different choices of poles. Table~\ref{table:tuck} shows the number of Arnoldi iterations needed to reach a relative norm of the residual less than $10^{-4}$ and $10^{-6}$. Moreover, the table also contains the time of execution of the algorithms.

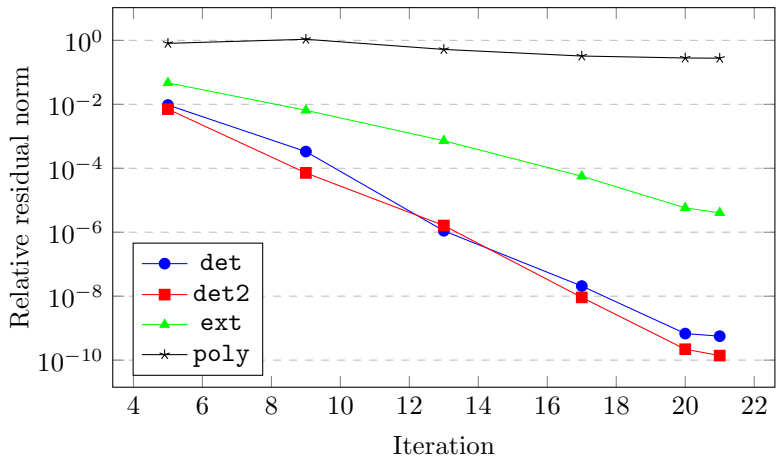
\begin{figure}
	\makebox[\linewidth][c]{
		\begin{tikzpicture}
			\begin{semilogyaxis}[
				title = {},  % whatever name you want
				xlabel = {Iteration},
				ylabel = {Relative residual norm},
				x tick label style={/pgf/number format/.cd,%
					scaled x ticks = false,
					set thousands separator={},
					fixed},
				legend pos=south west,
				ymajorgrids=true,
                height=.45\textwidth,
				grid style=dashed]
		
				\addplot[color=blue, mark=*, mark size=2pt] table {Tuck_symm_d4_det.dat};
				\addlegendentry{{\tt{det}}}

				\addplot[color=red, mark=square*, mark size=2pt] table {Tuck_symm_d4_det2.dat};
				\addlegendentry{{\tt{det2}}}

                \addplot[color=green, mark=triangle*, mark size=2pt] table {Tuck_symm_d4_ext.dat};
				\addlegendentry{{\tt{ext}}}

                \addplot[color=black, mark=star, mark size=2pt] table {Tuck_symm_d4_poly.dat};
				\addlegendentry{{\tt{poly}}}
			\end{semilogyaxis}
	\end{tikzpicture}}
\caption{Behavior of the relative norm of the residual produced by solving the discretized Poisson equation of dimension $d=4$ with $f=1/((1+x_1+x_2)(1+x_3+x_4))$ employing \tuckTBRK methods, with different choices of poles.} \label{plot:diffusion-d4}
\end{figure}

\begin{figure}
	\makebox[\linewidth][c]{
		\begin{tikzpicture}
			\begin{semilogyaxis}[
				title = {},  % whatever name you want
				xlabel = {Iteration},
				ylabel = {Relative residual norm},
				x tick label style={/pgf/number format/.cd,%
					scaled x ticks = false,
					set thousands separator={},
					fixed},
				legend pos=south west,
				ymajorgrids=true,
                height=.45\textwidth,
				grid style=dashed]
		
				\addplot[color=blue, mark=*, mark size=2pt] table {Tuck_symm_d3_n=128.dat};
				\addlegendentry{{$n=128$}}

				\addplot[color=red, mark=square*, mark size=2pt] table {Tuck_symm_d3_n=256.dat};
				\addlegendentry{{$n=256$}}

                \addplot[color=green, mark=triangle*, mark size=2pt] table {Tuck_symm_d3_n=512.dat};
				\addlegendentry{{$n=512$}}

                \addplot[color=black, mark=star, mark size=2pt] table {Tuck_symm_d3_n=1024.dat};
				\addlegendentry{{$n=1024$}}
			\end{semilogyaxis}
	\end{tikzpicture}}
\caption{Behavior of the relative norm of the residual produced by solving the discretized Poisson equation of dimension $d=3$ with $f=1/(1+x_1+x_2+x_3)$ employing \tuckTBRK methods, with poles chosen accordingly to \texttt{det} for different sizes of the discretization grid.} \label{plot:diffusion-d3-comparing-n}
\end{figure}

\begin{figure}
	\makebox[\linewidth][c]{
		\begin{tikzpicture}
			\begin{semilogyaxis}[
				title = {},  % whatever name you want
				xlabel = {Mean of iterations},
				ylabel = {Relative residual norm},
				x tick label style={/pgf/number format/.cd,%
					scaled x ticks = false,
					set thousands separator={},
					fixed},
				legend pos=south west,
				ymajorgrids=true,
                height=.45\textwidth,
				grid style=dashed]
		
				\addplot[color=blue, mark=*, mark size=2pt] table {Tuck_nonsymm_d3_det.dat};
				\addlegendentry{{\tt{det}}}

				\addplot[color=red, mark=square*, mark size=2pt] table {Tuck_nonsymm_d3_det2.dat};
				\addlegendentry{{\tt{det2}}}

                \addplot[color=green, mark=triangle*, mark size=2pt] table {Tuck_nonsymm_d3_ext.dat};
				\addlegendentry{{\tt{ext}}}

                \addplot[color=black, mark=star, mark size=2pt] table {Tuck_nonsymm_d3_poly.dat};
				\addlegendentry{{\tt{poly}}}
			\end{semilogyaxis}
	\end{tikzpicture}}
\caption{Behavior of the relative norm of the residual produced by solving a discretized convection-diffusion equation of dimension $d=3$ with $f=1/((1+x_1+x_2+x_3))$ employing \tuckTBRK methods, with different choices of poles.}\label{plot:convection-d3}
\end{figure}

\begin{table}
\centering
    \begin{tabular}{||c||ccc||ccc||}
        \hline
        poles & iterations & time (s) & residual & iterations & time (s) & residual \\
        \hline\hline
        det & $ 11$  $20$  $20 $ & $6.06 $&$ 2.32e-05 $ & $ 15$  $27$  $27 $ & $15.66 $&$ 2.25e-07 $\\ 
        det2 & $9$  $12$  $12 $ &$ 2.34 $&$ 8.83e-05$ & $17$  $20 $ $20 $ &$ 8.75 $&$ 7.91e-08$ \\
        ext & $ 19$  $19 $ $19 $ &$ 10.36 $&$ 3.35e-05 $ &$ 25$  $25$  $25 $ &$ 28.70 $&$ 5.17e-07 $ \\
        \hline
    \end{tabular}
    \caption{Iterations and time needed to reach a relative norm of the residual less than $10^{-4}$ (left) and $10^{-6}$ (right) for the solution of discretized convection-diffusion equation of dimension $d=3$, with $f=1/((1+x_1+x_2+x_3))$ employing \tuckTBRK methods, with different choices of poles.}\label{table:tuck}
\end{table}

\subsection{Tensor Train format}
In this section, we provide numerical results for the case of right hand side in Tensor Train format, employing the algorithm described in Section~\ref{sec:TT}, denoted by \ttTBRK.
We have implemented the 
\ttTBRK methods in MATLAB, using the TT-Toolbox \cite{oseledetstt} to manage tensors in TT format. 

In Figure~\ref{plot:TT-diffusion-d6} we show the behavior of the relative norm of the residual by varying the number of Arnoldi iterations, for the solution of a $6$-dimensonal discretized Poisson equation employing \ttTBRK methods with different choices of poles. 
In Figure~\ref{plot:TT-diffusion-d5-comparing-n} the behavior of the relative norm of the residual is compared for different sizes of the discretization grid for solving a $5$-dimensional Poisson equation.
Figure~\ref{plot:TT-convection-d5} shows the behavior of the relative norm of the residual by varying the mean number of Arnoldi iterations, for the solution of a $5$-dimensonal discretized convection-diffusion equation, where $\epsilon=0.1$ and $ w=(1 + \frac{(x_1 + 1)^2}{4},\frac{(1+x_2)}{2},0,0,0)$ employing \ttTBRK methods with different choices of poles.

In Table~\ref{table:Amen-comparation} we compare the execution time of \ttTBRK and AMEn, to reach a relative norm of the residual less than $10^{-8}$ for the solution of a $d$ dimensional Poisson equation for different values of $d$. We remark that in the first two cases AMEn does not reach the required accuracy. 

To show the potentiality of the presented algorithm for the solution of high dimensional PDEs, we report in Table~\ref{table:tt-tbrk-large-d} the time and the number of Arnoldi iterations employed by \ttTBRK for the computation of the solution of high dimensional Poisson equations with a relative norm of the residual less than $10^{-6}$. From the results it appears that the number of iterations does not grow up when the space dimension $d$ increases. The more than linear increase of the computational time is due to the resolution of the small-size tensor Sylvester equation by the AMEn algorithm.

\begin{figure}
	\makebox[\linewidth][c]{
		\begin{tikzpicture}
			\begin{semilogyaxis}[
				title = {},  % whatever name you want
				xlabel = {Iteration},
				ylabel = {Relative residual norm},
				x tick label style={/pgf/number format/.cd,%
					scaled x ticks = false,
					set thousands separator={},
					fixed},
				legend pos=south west,
				ymajorgrids=true,
                height=.45\textwidth,
				grid style=dashed]
		
				\addplot[color=blue, mark=*, mark size=2pt] table {TT_symm_d6_det.dat};
				\addlegendentry{{\tt{det}}}

				\addplot[color=red, mark=square*, mark size=2pt] table {TT_symm_d6_det2.dat};
				\addlegendentry{{\tt{det2}}}

                 \addplot[color=green, mark=triangle*, mark size=2pt] table {TT_symm_d6_ext.dat};
				 \addlegendentry{{\tt{ext}}}

                 \addplot[color=black, mark=star, mark size=2pt] table {TT_symm_d6_poly.dat};
				 \addlegendentry{{\tt{poly}}}
			\end{semilogyaxis}
	\end{tikzpicture}}
\caption{Behavior of the relative norm of the residual produced by solving the discretized Poisson equation of dimension $d=6$ with random right hand side of TT rank $(2,2,\dots,2)$, employing \ttTBRK methods, with different choices of poles.} \label{plot:TT-diffusion-d6}
\end{figure}
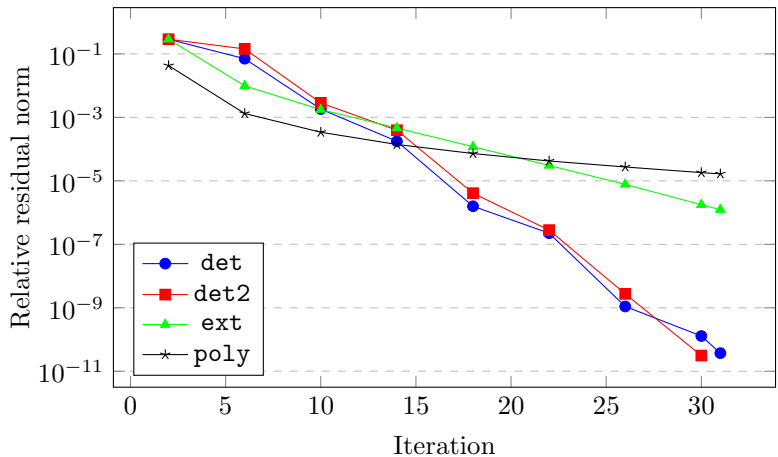

\begin{figure}
	\makebox[\linewidth][c]{
		\begin{tikzpicture}
			\begin{semilogyaxis}[
				title = {},  % whatever name you want
				xlabel = {Iteration},
				ylabel = {Relative residual norm},
				x tick label style={/pgf/number format/.cd,%
					scaled x ticks = false,
					set thousands separator={},
					fixed},
				legend pos=south west,
				ymajorgrids=true,
                height=.45\textwidth,
				grid style=dashed]
		
				\addplot[color=blue, mark=*, mark size=2pt] table {TT_symm_d5_n=128.dat};
				\addlegendentry{{$n=128$}}

				\addplot[color=red, mark=square*, mark size=2pt] table {TT_symm_d5_n=256.dat};
				\addlegendentry{{$n=256$}}

                 \addplot[color=green, mark=triangle*, mark size=2pt] table {TT_symm_d5_n=512.dat};
				 \addlegendentry{{$n=512$}}

                 \addplot[color=black, mark=star, mark size=2pt] table {TT_symm_d5_n=1024.dat};
				 \addlegendentry{{$n=1024$}}
			\end{semilogyaxis}
	\end{tikzpicture}}
\caption{Behavior of the relative norm of the residual produced by solving the discretized Poisson equation of dimension $d=5$ with random right hand side of TT rank $(2,2,\dots,2)$, employing \ttTBRK methods,  with poles chosen accordingly to \texttt{det} for different sizes of the discretization grid.} \label{plot:TT-diffusion-d5-comparing-n}
\end{figure}
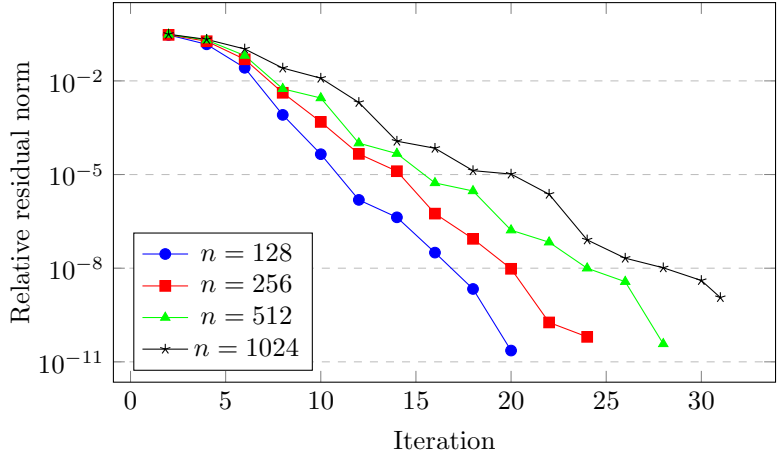

    \begin{figure}
        \makebox[\linewidth][c]{
            \begin{tikzpicture}
                \begin{semilogyaxis}[
                    title = {},  % whatever name you want
                    xlabel = {Mean of iterations},
                    ylabel = {Relative residual norm},
                    x tick label style={/pgf/number format/.cd,%
                        scaled x ticks = false,
                        set thousands separator={},
                        fixed},
                    legend pos=south west,
                    ymajorgrids=true,
                    height=.45\textwidth,
                    grid style=dashed]
            
                    \addplot[color=blue, mark=*, mark size=2pt] table {TT_nonsymm_d5_det.dat};
                    \addlegendentry{{\tt{det}}}
    
                    \addplot[color=red, mark=square*, mark size=2pt] table {TT_nonsymm_d5_det2.dat};
                    \addlegendentry{{\tt{det2}}}
    
                    \addplot[color=green, mark=triangle*, mark size=2pt] table {TT_nonsymm_d5_ext.dat};
                    \addlegendentry{{\tt{ext}}}
    
                    \addplot[color=black, mark=star, mark size=2pt] table {TT_nonsymm_d5_poly.dat};
                    \addlegendentry{{\tt{poly}}}
                \end{semilogyaxis}
        \end{tikzpicture}}
    \caption{Behavior of the relative norm of the residual produced by solving a discretized convection-diffusion equation of dimension $d=5$ with random right hand side of TT rank $(2,2,\dots,2)$, employing \ttTBRK methods, with different choices of poles.}\label{plot:TT-convection-d5}
    \end{figure}
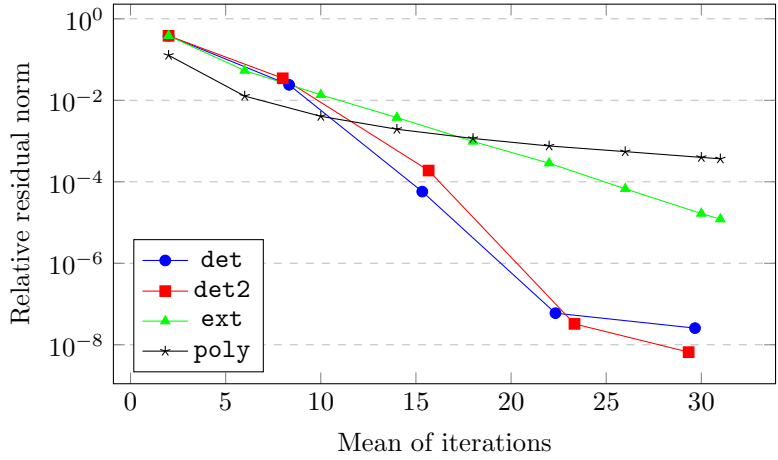 
    
    \begin{table}
        \centering
            \begin{tabular}{|c|ccc|}
                \hline
                & d & residual & time (s) \\
                \hline \hline
\ttTBRK & $3$ & $6.00e-09$ & $17.63$ \\
AMEn & $3$ & $2.41e-07$ & $2044.36$ \\
\hline \hline
\ttTBRK & $4$ & $6.91e-09$ & $56.10$ \\
AMEn & $4$ & $2.69e-08$ & $6671.18$ \\
\hline \hline
\ttTBRK & $5$ & $7.53e-09$ & $78.70$ \\
AMEn & $5$ & $6.25e-09$ & $2918.36$ \\
    \hline
     \end{tabular}
            \caption{Comparison of execution time and accuracy between \ttTBRK with poles chosen accordingly with {\tt det2} and AMEn to reach relative norm of the residual less than $10^{-8}$ for the solution of a $d$ dimensional Poisson equation for different values of $d$.} \label{table:Amen-comparation}
        \end{table}

        \begin{table}
            \centering
                \begin{tabular}{|cccc|}
                    \hline
 $d$ & residual  & Arnoldi iterations &  time (s) \\
 \hline \hline
$5$ & $5.30e-08$ & $26$ & $13.80$ \\
$10$ & $7.07e-07$ & $26$ & $20.28$ \\
$15$ & $7.85e-07$ & $26$ & $163.40$ \\
$20$ & $1.41e-07$ & $22$ & $4507.42$ \\
\hline
         \end{tabular}
                \caption{Time, accuracy and number of Arnoldi iterations of \ttTBRK with poles chosen accordingly with {\tt det} required to reach relative norm of the residual less than $10^{-6}$ for the solution of a $d$ dimensional Poisson equation for large values of $d$. } \label{table:tt-tbrk-large-d}
            \end{table}

\section{Conclusions} In this work we have provided a characterization of tensorized block rational Krylov subspaces using multivariate rational functions. We have also developed a method for solving tensor Sylvester equations with low multilinear or Tensor Train rank, based on Galerkin projection onto a tensorized block rational Krylov subspace, providing a convergence analysis. Generalizing the results of \cite{casulli2022}, we have developed strategies for pole selection and efficient techniques for the computation of the residual based on poles reordering. 
We expect that tensorized block rational Krylov subspaces can be used for solving more general high dimensional tensor problems, such as the computation of functions of matrices with multiterm Kronecker structures.

The code of the resulting algorithm for solving tensor Sylvester equations has been made freely available at \url{https://github.com/numpi/TBRK-Sylvester}.

\section*{Acknowledgements} The author would like to thank Michele Benzi and Leonardo Robol for their support and advice.

    \bibliographystyle{plain}
    \bibliography{biblio_tensor}

\end{document}